\documentclass[11pt,reqno]{amsart}

\usepackage{iftex}
\ifPDFTeX
  \usepackage{cmap}
  \makeatletter\expandafter\let\csname cmap@set@OMX\endcsname\empty\makeatother
  \input{glyphtounicode}
  \newcommand{\gtu}[2]{\pdfglyphtounicode{#1big}{#2}%
    \pdfglyphtounicode{#1Big}{#2}\pdfglyphtounicode{#1bigg}{#2}%
    \pdfglyphtounicode{#1Bigg}{#2}}
  \newcommand{\gtuop}[2]{\pdfglyphtounicode{#1text}{#2}%
    \pdfglyphtounicode{#1display}{#2}}
  \gtu{parenleft}{0028}\gtu{parenright}{0029}\gtu{bracketleft}{005B}
  \gtu{bracketright}{005D}\gtu{braceleft}{007B}\gtu{braceright}{007D}
  \gtu{floorleft}{230A}\gtu{floorright}{230B}\gtu{ceilingleft}{2308}
  \gtu{ceilingright}{2309}\gtu{angbracketleft}{27E8}\gtu{angbracketright}{27E9}
  \gtu{radical}{221A}\gtu{slash}{002F}\gtu{backslash}{005C}
  \pdfglyphtounicode{vextendsingle}{007C}
  \pdfglyphtounicode{vextenddouble}{2016}
  \gtuop{summation}{2211}\gtuop{product}{220F}\gtuop{coproduct}{2210}
  \gtuop{union}{22C3}\gtuop{intersection}{22C2}\gtuop{unionmulti}{228E}
  \gtuop{unionsq}{2294}\gtuop{logicaland}{22C0}\gtuop{logicalor}{22C1}
  \gtuop{integral}{222B}\gtuop{contintegral}{222E}\gtuop{circledot}{2A00}
  \gtuop{circleplus}{2A01}\gtuop{circlemultiply}{2A02}
\fi
\usepackage[T1]{fontenc}
\usepackage{amssymb,mathtools}
\usepackage{enumitem}
\usepackage{tikz,pgfplots}
\pgfplotsset{compat=1.18}
\usetikzlibrary{arrows.meta,positioning,patterns,calc,decorations.pathreplacing,
  shapes.geometric,fit}
\usepackage{hyperref}
\hypersetup{colorlinks=true,linkcolor=blue!60!black,citecolor=red!60!black,
  urlcolor=blue!60!black,
  pdftitle={Resolving Erdos-Ulam Monochromatic Union-Closed Family Conjectures},
  pdfauthor={Deep Bhattacharjee, Priyabrata Mandal, Ushashi Bhattacharya}}

\numberwithin{equation}{section}

\theoremstyle{plain}
\newtheorem{theorem}{Theorem}[section]
\newtheorem{lemma}[theorem]{Lemma}
\newtheorem{proposition}[theorem]{Proposition}
\newtheorem{corollary}[theorem]{Corollary}
\newtheorem{question}[theorem]{Question}
\theoremstyle{definition}
\newtheorem{definition}[theorem]{Definition}
\theoremstyle{remark}
\newtheorem{remark}[theorem]{Remark}

\newcommand{\cF}{\mathcal{F}}
\newcommand{\cL}{\mathcal{L}}
\newcommand{\cP}{\mathcal{P}}
\newcommand{\cS}{\mathcal{S}}
\newcommand{\cY}{\mathcal{Y}}
\newcommand{\Ex}{\mathbb{E}}
\newcommand{\eps}{\varepsilon}
\DeclareMathOperator{\vc}{vc}
\newcommand{\zenododoi}{10.5281/zenodo.23094057}

\usepackage{etoolbox}
\makeatletter
\patchcmd{\@setaddresses}{\footnotesize}{\footnotesize\raggedright}{}{}
\makeatother

\title[Erd\H{o}s--Ulam union-closed family conjectures]{Resolving
Erd\H{o}s--Ulam Monochromatic Union-Closed Family Conjectures}

\author[D. Bhattacharjee]{Deep Bhattacharjee$^{\dagger}$}
\address[Deep Bhattacharjee]{Formerly, Electro-Gravitational Space Propulsion Laboratory (EGSPL),
  Bhubaneswar, Odisha 751030, India}
\email{d.bhattacharjee@erl-forschung.de}
\email{itsdeep@live.com}
\urladdr{\href{https://orcid.org/0000-0003-0466-750X}{0000-0003-0466-750X}}

\author[P. Mandal]{Priyabrata Mandal}
\address[Priyabrata Mandal]{Department of Mathematics, Bioinformatics and Computer
  Applications, Maulana Azad National Institute of Technology Bhopal,
  Bhopal 462003, India}
\email{priyabrata@manit.ac.in}
\urladdr{\href{https://orcid.org/0000-0001-6472-6239}{0000-0001-6472-6239}}

\author[U. Bhattacharya]{Ushashi Bhattacharya}
\address[Ushashi Bhattacharya]{Formerly, National Taiwan University, Taipei 106319, Taiwan}
\email{rai.bhattacharya.in@gmail.com}
\urladdr{\href{https://orcid.org/0009-0002-2254-3914}{0009-0002-2254-3914}}

\thanks{$^{\dagger}$\,Corresponding author: Deep Bhattacharjee.}

\subjclass[2020]{Primary 11B75, 05D10; Secondary 05D05, 11B25, 68V20}
\keywords{union-closed families, Ramsey theory, Hilbert cubes, box norm,
van der Waerden's theorem, Sauer--Shelah lemma, Lean}
\date{}

\begin{document}
\sloppy

\begin{abstract}
We prove both Erd\H{o}s--Ulam conjectures on monochromatic union-closed
families. Every two-colouring of the subsets of a finite set contains a
monochromatic union-closed family whose size grows faster than any fixed
power of the size of the set, while suitable colourings admit no
monochromatic union-closed family of exponential size. Both results hold for any number of colours
and are verified in Lean.
\end{abstract}

\maketitle

\section{Introduction}
\label{sec:intro}

Write $[n]=\{1,\dots,n\}$ and $2^{[n]}$ for the family of all subsets of
$[n]$. For an integer $k\ge2$, a \emph{$k$-colouring} of $2^{[n]}$ is a
map $\chi:2^{[n]}\to[k]$, and a family $\cF\subseteq2^{[n]}$ is
\emph{monochromatic} if $\chi$ is constant on $\cF$. The family $\cF$ is
\emph{union-closed} if $A\cup B\in\cF$ whenever $A,B\in\cF$, and it is a
\emph{sublattice} if it is closed under both union and intersection.
Erd\H{o}s and Ulam asked how large a monochromatic family of either kind
a colouring is forced to contain. Accordingly let
\begin{align*}
f(n)&=\min_{\chi}\,\max\bigl\{|\cL|:\ \cL\subseteq2^{[n]}
  \text{ is a monochromatic sublattice}\bigr\},\\
F_k(n)&=\min_{\chi}\,\max\bigl\{|\cF|:\ \cF\subseteq2^{[n]}
  \text{ is a monochromatic union-closed family}\bigr\},
\end{align*}
where the first minimum is over all $2$-colourings and the second over
all $k$-colourings of $2^{[n]}$, and write $F=F_2$. A single set is a
sublattice and every sublattice is union-closed, so $1\le f(n)\le F(n)$.
A maximal chain $\emptyset\subset\{1\}\subset\{1,2\}\subset\dots\subset[n]$
has $n+1$ members, at least $\lceil(n+1)/2\rceil$ of them receive the same
colour under a $2$-colouring, and every subchain is a sublattice. Hence
\begin{equation}
\label{eq:trivial}
F(n)\ \ge\ f(n)\ \ge\ \Bigl\lceil\frac{n+1}{2}\Bigr\rceil .
\end{equation}
A $2$-colouring is also a $k$-colouring, so $F_k(n)\le F(n)$ for every
$k\ge2$. Figure~\ref{fig:lattice} shows the smallest interesting case.

\begin{figure}[t]
\centering
\begin{tikzpicture}[
    x={(3.1cm,0cm)}, y={(1.35cm,0.95cm)}, z={(0cm,2.8cm)},
    red/.style={circle, draw=red!55!black, fill=red!75!black, line width=0.7pt,
      inner sep=0pt, minimum size=9pt},
    blue/.style={rectangle, draw=blue!55!black, fill=blue!15, line width=0.9pt,
      inner sep=0pt, minimum size=8pt},
    lab/.style={font=\small, inner sep=2pt}]
  \fill[gray!10] (0,0,0) -- (1,0,0) -- (1,0,1) -- (0,0,1) -- cycle;
  \fill[gray!18] (0,0,1) -- (1,0,1) -- (1,1,1) -- (0,1,1) -- cycle;
  \fill[red!8]   (1,0,0) -- (1,1,0) -- (1,1,1) -- (1,0,1) -- cycle;
  \draw[dashed, gray!70] (0,0,0) -- (0,1,0) -- (1,1,0);
  \draw[dashed, gray!70] (0,1,0) -- (0,1,1);
  \draw[gray!60!black, line width=0.6pt]
    (0,0,0) -- (1,0,0) -- (1,1,0) -- (1,1,1) -- (0,1,1) -- (0,0,1) -- cycle
    (1,0,0) -- (1,0,1) -- (1,1,1)
    (0,0,1) -- (1,0,1);
  \draw[red!65!black, line width=1.8pt]
    (1,0,0) -- (1,1,0) -- (1,1,1) -- (1,0,1) -- cycle;
  \draw[blue!60!black, line width=1.8pt] (0,0,0) -- (0,0,1) -- (0,1,1);
  \node[blue] (e)   at (0,0,0) {};
  \node[red]  (a)   at (1,0,0) {};
  \node[red]  (b)   at (0,1,0) {};
  \node[blue] (c)   at (0,0,1) {};
  \node[red]  (ab)  at (1,1,0) {};
  \node[red]  (ac)  at (1,0,1) {};
  \node[blue] (bc)  at (0,1,1) {};
  \node[red]  (abc) at (1,1,1) {};
  \node[lab, below left=1pt]            at (e)   {$\emptyset$};
  \node[lab, below right=1pt]           at (a)   {$\{1\}$};
  \node[lab, above right=1pt]           at (b)   {$\{2\}$};
  \node[lab, left=3pt]                  at (c)   {$\{3\}$};
  \node[lab, right=3pt]                 at (ab)  {$\{1,2\}$};
  \node[lab, below left=1pt]            at (ac)  {$\{1,3\}$};
  \node[lab, above left=1pt]            at (bc)  {$\{2,3\}$};
  \node[lab, above right=1pt]           at (abc) {$\{1,2,3\}$};
\end{tikzpicture}
\caption{A $2$-colouring of the subsets of $\{1,2,3\}$, drawn on the
cube $2^{[3]}$. The five red sets (discs) form a union-closed family which
is not a sublattice, since $\{1\}\cap\{2\}=\emptyset$ is blue. The three
blue sets (squares) form a chain.}
\label{fig:lattice}
\end{figure}

The question is Problem~1183 in Bloom's collection of Erd\H{o}s problems
\cite{Bl26}, where it is listed as open. As recorded there, Erd\H{o}s had
no plausible conjecture for the true order of magnitude of $F(n)$, but
asked whether
\begin{equation}
\label{eq:conj}
F(n)\ \ge\ n^{\omega(n)}\ \text{ for some }\omega(n)\to\infty,
\qquad\text{and}\qquad
F(n)\ <\ (1+o(1))^{n}.
\end{equation}
Erd\H{o}s also noted that Howorka had proved the first statement for
colourings in which the colour of a set depends only on its
size.\footnote{The problem page states Howorka's result without a
reference, and we have not found Howorka's argument in print.
Section~\ref{sec:card} gives a complete proof, with an explicit bound in
terms of van der Waerden numbers.} We prove both statements in
\eqref{eq:conj} for arbitrary colourings, and for any number of colours.

\subsection{Results}

The main result is a polynomial lower bound of every degree, with an
explicit constant.

\begin{theorem}
\label{thm:lower}
Let $k\ge2$ and $t\ge1$ be integers, put $M=t^{2}k^{2^{t}}$ and
\[
c_{k,t}=\Bigl(k^{2^{t}+1}\,t^{t}\,(4M^{2})^{t+1}\Bigr)^{-1}.
\]
Every $k$-colouring of $2^{[n]}$ contains a monochromatic union-closed
family with at least $c_{k,t}\,n^{t+1}$ members; that is,
$F_k(n)\ge c_{k,t}\,n^{t+1}$ for all $n\ge1$. For two colours,
$F(n)\ge n^{t}$ whenever $\log_2n\ge(t+2)^{2}\,2^{t}$.
\end{theorem}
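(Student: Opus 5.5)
The plan is to reduce the problem to a Ramsey statement on a grid, and then to find a large max-closed monochromatic set there by an induction on $t$ driven by a box-norm (Hilbert cube) counting argument. First I reduce to a grid. Partition $[n]$ into $t+1$ blocks $X_1,\dots,X_{t+1}$ of size $m=\lfloor n/(t+1)\rfloor$, and fix a maximal chain $\emptyset=C_j^{(0)}\subset C_j^{(1)}\subset\dots\subset C_j^{(m)}=X_j$ in each block. The map
\[
\phi(a_1,\dots,a_{t+1})=C_1^{(a_1)}\cup\dots\cup C_{t+1}^{(a_{t+1})}
\]
is an injective lattice embedding of the grid $\{0,\dots,m\}^{t+1}$, with coordinatewise maximum, into $(2^{[n]},\cup)$. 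Pulling $\chi$ back gives a $k$-colouring of the grid. It then suffices to find a monochromatic set $A$ in the grid that is closed under coordinatewise maximum and has $|A|\ge c\,m^{t+1}$. The image $\phi(A)$ is then union-closed and monochromatic of the same size. The factor $(t+1)^{t+1}\le t^t(4M^2)$-type losses in $c_{k,t}$ come from this step.

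Next I find a dense max-closed monochromatic set by induction on the dimension, using the box norm. For a colour class $E\subseteq\{0,\dots,m\}^{t+1}$ of density at least $1/k$, I would consider the $2^{t}$-vertex combinatorial boxes obtained by fixing the last coordinate and choosing two values in each of the first $t$ coordinates. The standard Cauchy--Schwarz (Gowers box-norm) inequality bounds the density of monochromatic boxes from below by $k^{-2^t}$. Pigeonholing over the at most $M=t^2k^{2^t}$ possible ``types'' produces a colour $c$ and a fixed box pattern appearing for a positive proportion, roughly $M^{-2}$, of the base points. The type records the colour together with which coordinate pairs are ordered, and this is where $M$ enters. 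The key structural point is that the set of top corners of monochromatic boxes of a fixed colour is an upper-closed object with respect to the chosen pattern. I would then define $A$ as the set of all coordinatewise maxima of those corners, and show it is still coloured $c$. The idea is to use the cube structure: the maximum of two box-corners is again a corner of a larger box of the same type, provided the pattern is chosen by a greedy or ``maximal extension'' rule in each coordinate. I expect this choice to be exactly what the induction hypothesis in dimension $t$ supplies.

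Inducting in this way on the number of free coordinates, each step costs a factor of order $(4M^2)^{-1}$ and a $t$-dependent combinatorial factor. Multiplying the losses gives $|A|\ge c_{k,t}n^{t+1}$. The two-colour clause then follows by substituting $k=2$. Note that $c_{2,t}^{-1}=2^{O(t2^t)}t^{O(t)}$, so $c_{2,t}n^{t+1}\ge n^t$ as soon as $n\ge c_{2,t}^{-1}$, which holds when $\log_2 n\ge (t+2)^2 2^t$ after a routine estimate.

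The main obstacle is the closure step: ensuring that coordinatewise maxima of points in the extracted family remain in the colour class. A merely dense monochromatic set is not max-closed, and extracting a monochromatic product box of positive density is impossible. My first attempt would therefore be to build $A$ as an ``upper staircase'': for each fixed prefix of coordinates, keep only the largest admissible value of the next coordinate, as selected by the pigeonholed pattern. Maxima of such points then stay inside one structured pattern, and the counting argument only needs to show that this pattern still carries a $\gg M^{-2}$ fraction of the grid at each level. If this fails, the fallback is to let the chains $C_j$ be chosen adaptively, coordinate by coordinate, rather than fixed in advance.
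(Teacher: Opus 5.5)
\textbf{The reduction to a grid of chains cannot work.} The statement you reduce to is false, so this is a genuine gap and more than the unresolved closure step you flag. Since $\phi$ is injective, every $k$-colouring of $\{0,\dots,m\}^{t+1}$ is the pull-back of some $\chi$. You would therefore need every colouring of the grid to contain a monochromatic max-closed set of size $c\,m^{t+1}$.

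This already fails in dimension two. Let $A\subseteq\{0,\dots,m\}^2$ be max-closed. For each row $b$ meeting $A$ let $\mu_b$ be the least $a$ with $(a,b)\in A$, and for each column $a$ let $b_0(a)$ be the least $b$ with $(a,b)\in A$. We have $(a,b_0(a))\vee(\mu_b,b)=(a,b)$ whenever $\mu_b<a$ and $b\ge b_0(a)$. Hence column $a$ of $A$ is exactly $\{b:\mu_b=a\}\cup\{b\ge b_0(a):\mu_b<a\}$. So $A$ is determined by the two sequences, and there are at most $(m+2)^{2(m+1)}$ max-closed sets.

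Each slice of a max-closed subset of $\{0,\dots,m\}^{t+1}$ with fixed last coordinate is max-closed. Induction on the dimension then gives only $2^{O(m^{t}\log m)}$ max-closed subsets in dimension $t+1$. Run the first-moment argument as in Proposition~\ref{prop:count}, removing a point of least coordinate sum as in Lemma~\ref{lem:subfamily}. This gives a colouring of the grid with no monochromatic max-closed set of size $c\,m^{t+1}$ once $m$ is large.

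Your fallback of choosing the chains adaptively does not help. For a random $\chi$ it multiplies the count only by the at most $n!=2^{O(n\log n)}$ choices of blocks and chains. That is still negligible against $k^{-c m^{t+1}}$ when $t\ge1$. Consequently the ``upper staircase'' construction, and the claim that maxima of box corners are corners of a larger box of the same type, have no true target. No family lying inside a product of $t+1$ chains can reach $c_{k,t}n^{t+1}$.

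\textbf{What the paper does instead.} The missing idea is to use the Boolean structure of $2^{[n]}$ rather than a grid. The paper takes a single chain of blocks $Y_1,\dots,Y_m$ with $C_j=Y_1\cup\dots\cup Y_j$. It uses the $t$-cubes $Q(j,A)=\{C_j\setminus Y_B:\ B\subseteq A\}$ for $s<j\le m$, $A\subseteq[s]$, $|A|=t$.

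\begin{itemize}
\item \emph{Gluing.} By the union rule (Lemma~\ref{lem:union}), unions of members correspond to intersections of the deleted index sets. So any collection of these cubes of one colour glues into a union-closed family (Proposition~\ref{prop:glue}). Already for a single $j$ there are $2^{\binom st}$ union-closed subfamilies, so the counting obstruction above disappears.
\item \emph{Role of the box inequality.} It is used only to find, in every window $[r,r+M]$ of base sizes, one cube shape that is monochromatic with probability at least $k^{-2^{t}}$ under a random permutation of $[n]$ (Proposition~\ref{prop:saturation}). Here $M=t^{2}k^{2^{t}}$ is the window length needed to absorb degenerate boxes, not a number of types.
\item \emph{Averaging.} The paper also randomises the block sizes in $[M]$. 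The slice and averaging lemmas (Lemmas~\ref{lem:slices} and~\ref{lem:average}) then make a proportion $k^{-2^{t}}M^{-t-1}$ of all cubes $Q(j,A)$ monochromatic at once. There are $|\cP|\ge n^{t+1}/(t^{t}(4M)^{t+1})$ such cubes.
\item \emph{Constant.} One further factor $k$ for choosing a common colour gives exactly $c_{k,t}$.
\end{itemize}
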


The upper bound comes from a random colouring and the Sauer--Shelah
lemma.

\begin{theorem}
\label{thm:upper}
Let $d\ge1$ be an integer with $2^{d}>nd+2$. Then
\[
F(n)\ \le\ \sum_{j=0}^{d-1}\binom{n}{j}.
\]
In particular, for $n\ge2$ and
$d=\lfloor\log_2n\rfloor+\lfloor\log_2\lfloor\log_2n\rfloor\rfloor+3$,
\[
F(n)\ \le\ d\,n^{d-1}\ \le\ \bigl(\log_2n+\log_2\log_2n+3\bigr)\,
n^{\log_2n+\log_2\log_2n+2}.
\]
\end{theorem}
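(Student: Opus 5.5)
We colour $2^{[n]}$ with two colours independently and uniformly at random. We show that with positive probability every monochromatic union-closed family has fewer than $\sum_{j<d}\binom nj$ members. The link to Sauer--Shelah is this: if a family $\cF\subseteq2^{[n]}$ has $|\cF|\ge\sum_{j=0}^{d-1}\binom nj+1$ members, the Sauer--Shelah lemma gives a set $S\subseteq[n]$ with $|S|=d$ that $\cF$ shatters. So it suffices to show that every large union-closed family contains a rigid substructure of size exponential in $d$ that is determined by few parameters. We then union-bound over those substructures.

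\textbf{Step 1 (structure from shattering plus union-closure).} Let $\cF$ be union-closed and shatter $S=\{s_1,\dots,s_d\}$. For each $i$, pick $A_i\in\cF$ with $A_i\cap S=S\setminus\{s_i\}$. For each nonempty $I\subseteq[d]$, the union $A_I=\bigcup_{i\in I}A_i$ lies in $\cF$ and satisfies $A_I\cap S=S$ when $|I|\ge2$. That alone does not make the $A_I$ distinct. So instead, for each $i$ pick $B_i\in\cF$ with $B_i\cap S=\{s_i\}$. Then for every nonempty $I\subseteq[d]$ the set $B_I=\bigcup_{i\in I}B_i$ lies in $\cF$ and satisfies $B_I\cap S=\{s_i:i\in I\}$. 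These $2^d-1$ sets are therefore pairwise distinct. The whole configuration is determined by the $d$ sets $B_1,\dots,B_d$, and each of them is one of at most $2^n$ subsets. (We do not even need to record $S$, since distinctness of the unions is what matters.)

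\textbf{Step 2 (counting / union bound).} Call a $d$-tuple $(B_1,\dots,B_d)$ \emph{good} if its $2^d-1$ unions $B_I$ are distinct. There are at most $2^{nd}$ good tuples. For a fixed good tuple, the probability that all $2^d-1$ unions receive the same colour is $2\cdot2^{-(2^d-1)}=2^{2-2^d}$. Summing over tuples, the probability that some good tuple is monochromatic is at most $2^{nd+2-2^d}$, which is $<1$ exactly when $2^d>nd+2$. Fix a colouring with no monochromatic good tuple. If $\cF$ were a monochromatic union-closed family with more than $\sum_{j<d}\binom nj$ members, Steps 1 and Sauer--Shelah would produce a monochromatic good tuple, a contradiction. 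This gives the first inequality.

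\textbf{Step 3 (the explicit choice of $d$).} Set $L=\lfloor\log_2n\rfloor$ and $d=L+\lfloor\log_2L\rfloor+3$. We need $2^d>nd+2$. We have $2^d\ge 2^{L+\lfloor\log_2 L\rfloor+3}>n\cdot L\cdot 4$, using $2^{L+1}>n$ and $2^{\lfloor\log_2L\rfloor+1}>L$. It remains to check $4L\ge d+1$, i.e.\ $3L\ge\lfloor\log_2L\rfloor+4$. This holds for $L\ge2$; the cases $n=2,3$ with $L=1$ are checked directly. Finally, $\sum_{j<d}\binom nj\le\sum_{j<d}n^j\le d\,n^{d-1}$ for $n\ge2$, and the floors can be dropped to give the last displayed bound.

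The main obstacle is Step 1, the reduction from a large union-closed family to a configuration with exponentially many distinct members but only $nd$ bits of description. The key trick is to use the shattered traces that are \emph{singletons}, because unions of those sets have distinct traces on $S$. The arithmetic in Step 3 is routine but fiddly at small $n$.
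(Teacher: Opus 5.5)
Your Steps 1 and 2 follow the paper's argument and correctly prove $F(n)\le\sum_{j<d}\binom nj$. The paper counts $d$-tuples that are independent systems, meaning each $A_i$ has a private element. It extracts such a tuple from a shattered $d$-set exactly as you do, by taking members whose trace on $S$ is a singleton. Counting all tuples with pairwise distinct unions instead makes no difference.

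The gap is in Step 3. Since $d=(L+1)+(\lfloor\log_2L\rfloor+1)+1$, the two inequalities you cite give only $2^d>2nL$, not $2^d>4nL$. The stronger claim is false: for $n=15$ you have $L=3$ and $d=7$, so $2^d=128<180=4nL$. Starting from the correct bound $2^d>2nL$, your reduction would need $2L\ge d+1$, that is $L\ge\lfloor\log_2L\rfloor+4$. This fails for $L\le5$, so your verification of $2^d>nd+2$ leaves every $n<64$ unchecked.

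The repair is to keep the $+1$'s, as the paper does. The bounds $2^{L+1}\ge n+1$ and $2^{\lfloor\log_2L\rfloor+1}\ge L+1$ give $2^d\ge2(n+1)(L+1)$. Using $\lfloor\log_2L\rfloor+2\le L+1$, you then get
\[
nd+2=n(L+1)+n\bigl(\lfloor\log_2L\rfloor+2\bigr)+2\le2n(L+1)+2<2(n+1)(L+1)
\]
for every $n\ge2$, with no case split needed. The rest of Step 3 is fine: the estimate $\sum_{j<d}\binom nj\le d\,n^{d-1}$ and the removal of the floors both go through.
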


Together the two theorems settle both parts of \eqref{eq:conj}, and they
locate the exponent: the largest $\omega(n)$ with $F(n)\ge n^{\omega(n)}$
lies between $(1-o(1))\log_2\log_2n$ and $(1+o(1))\log_2n$.

\begin{corollary}[The Erd\H{o}s--Ulam conjectures]
\label{cor:conj}
Both statements in \eqref{eq:conj} hold, in the following form.
\begin{enumerate}[label=\textup{(\roman*)},leftmargin=2em]
\item For every $n\ge2$ we have $F(n)\ge n^{\omega(n)}$, where
\[
\omega(n)=\ell-2\log_2(\ell+2)-1,\qquad \ell=\log_2\log_2n ,
\]
and $\omega(n)\to\infty$.
\item There is a function $\eps:\mathbb{N}\to\mathbb{R}$ with
$\eps(n)\to0$ such that $F(n)<(1+\eps(n))^{n}$ for every $n\ge1$.
\item If $\omega:\mathbb{N}\to\mathbb{R}$ satisfies $F(n)\ge
n^{\omega(n)}$ for every $n$, then for every $\eps>0$ we have
$\omega(n)\le(1+\eps)\log_2n$ for all large $n$.
\item For every $k\ge2$ there are functions $\omega_k(n)\to\infty$ and
$\eps_k(n)\to0$ such that $n^{\omega_k(n)}\le F_k(n)<(1+\eps_k(n))^{n}$
for every $n\ge1$.
\end{enumerate}
\end{corollary}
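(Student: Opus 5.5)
The corollary follows from Theorems~\ref{thm:lower} and~\ref{thm:upper} by choosing the parameters $t$ and $d$ as functions of $n$. I would handle the four parts in turn.

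For (i), I would use the two-colour clause of Theorem~\ref{thm:lower}: $F(n)\ge n^{t}$ whenever $\log_2 n\ge (t+2)^2 2^t$. Given $n$, write $L=\log_2 n$ and $\ell=\log_2 L$, and take $t$ to be the largest integer with $(t+2)^2 2^t\le L$, that is, with $\log_2 t' + \ldots$ replaced by the condition $t+2\log_2(t+2)\le \ell$. The plan is to show that this $t$ satisfies $t\ge \ell-2\log_2(\ell+2)-1=\omega(n)$. For the real number $s=\omega(n)$ we have $s+2\le \ell+2$, so $s+2\log_2(s+2)\le s+2\log_2(\ell+2)=\ell-1<\ell$. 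Hence $\lfloor s\rfloor$ also satisfies the condition, and since $t$ is the largest such integer, $t\ge\lfloor s\rfloor$.

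The obstacle is that we need $t\ge s$ itself, not just $t\ge\lfloor s\rfloor$. The slack "$-1$" handles this. The function $g(x)=x+2\log_2(x+2)$ is increasing, and $g(s)\le \ell-1$. I would check that $g(\lfloor s\rfloor+1)\le g(s)+1+2\log_2\frac{s+3}{s+2}$. This is slightly more than $g(s)+1$, so the crude estimate does not close. I would instead bound more carefully, noting that $\lfloor s\rfloor+1\le s+1$ gives $g(\lfloor s\rfloor +1)\le s+1+2\log_2(s+3)$. This is at most $\ell$ provided $s+3\le \ell+2$ up to the constant absorbed by the $-1$, which holds once $\ell\ge$ some small constant. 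Then $F(n)\ge n^{\lfloor s\rfloor+1}\ge n^{s}$.

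There are also degenerate cases. When $s\le 0$, the bound $F(n)\ge1$ suffices. When $s<1$ and $n\ge 2$, I would fall back on \eqref{eq:trivial}, which gives $F(n)\ge \lceil (n+1)/2\rceil\ge n^{s}$ for $s\le 1/2$ or so. A short case check on small $\ell$ should tidy up the window between these cases, and this is the fiddliest step. Finally, $\omega(n)\to\infty$ is clear because $\ell\to\infty$.

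For (ii), Theorem~\ref{thm:upper} gives $F(n)\le n^{(1+o(1))\log_2 n}=2^{O(\log^2 n)}$. I would define $\eps(n)=2^{(\log_2 F(n)+1)/n}-1$, so that $F(n)<(1+\eps(n))^n$ holds by construction. Since $\log_2 F(n)=O(\log^2 n)=o(n)$, we get $\eps(n)\to0$.

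For (iii), if $n^{\omega(n)}\le F(n)\le d\,n^{d-1}$, then taking logarithms gives $\omega(n)\le d-1+\log_2 d/\log_2 n$. With $d=\log_2 n+O(\log\log n)$, this is $(1+o(1))\log_2 n$.

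For (iv), the upper bound follows from $F_k\le F$ and the same $\eps_k=\eps$. For the lower bound I would use the general clause of Theorem~\ref{thm:lower}, $F_k(n)\ge c_{k,t}n^{t+1}$. Let $t(n)$ be the largest $t\le n$ with $c_{k,t}n\ge1$, so that $F_k(n)\ge n^{t(n)}$. For each fixed $t$ this condition eventually holds, so $t(n)\to\infty$. I would set $\omega_k(n)=\max(t(n),0)$, with the trivial bound $F_k(n)\ge 1$ covering small $n$.
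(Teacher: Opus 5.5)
Your proposal is correct and follows the paper's route. In (i) you end up applying the two-colour clause of Theorem~\ref{thm:lower} with $t=\lfloor s\rfloor+1=\lfloor \ell-2\log_2(\ell+2)\rfloor$, which is exactly the paper's choice, and (ii)--(iv) match the paper apart from inessential choices of $\eps(n)$ and $\omega_k(n)$. The hedging in (i) is unnecessary, however. The condition $s+3\le\ell+2$ is equivalent to $\log_2(\ell+2)\ge0$, which holds for every $n\ge2$, so $t=\lfloor s\rfloor+1$ works whenever $s>0$, and neither the fallback to \eqref{eq:trivial} nor a case check on small $\ell$ is needed.
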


Parts (i) and (ii) are the two statements of \eqref{eq:conj}, with the
term $o(1)$ written out as a function $\eps(n)\to0$, so
Corollary~\ref{cor:conj} answers both questions of Problem~1183 in the
affirmative. Since $\log_2(\ell+2)=o(\ell)$, the exponent in (i) is
$(1-o(1))\log_2\log_2n$. This is also, to first order, the size of the
monochromatic union-closed families that we can exhibit in a uniformly
random colouring (Proposition~\ref{prop:random}), so a better lower bound
for $F(n)$ would have to say something new about random colourings as
well. In the other direction, Proposition~\ref{prop:count} reduces an
upper bound for the random colouring to counting union-closed families.

The two functions of Erd\H{o}s and Ulam turn out to be of quite
different sizes. For sublattices the trivial bound \eqref{eq:trivial} is
correct up to a factor of order $(\log n)^{2}$.

\begin{theorem}
\label{thm:f}
For every $n\ge1$, with $L=\lfloor\log_2n\rfloor+1$,
\[
\Bigl\lceil\frac{n+1}{2}\Bigr\rceil\ \le\ f(n)\ \le\
\min\bigl\{\,n^{2}+n+1,\ \ nL(3L+2)+2n+3L+3\,\bigr\}.
\]
Thus $f(n)\le(3+o(1))\,n(\log_2n)^{2}$, whereas $F(n)/n^{C}\to\infty$ for
every constant $C$.
\end{theorem}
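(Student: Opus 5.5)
The lower bound $\lceil (n+1)/2\rceil\le f(n)$ is \eqref{eq:trivial}, and the closing comparison follows from the main bound here together with Theorem~\ref{thm:lower}. Indeed, for each fixed $C$, pick an integer $t>C+3$; Theorem~\ref{thm:lower} gives $F(n)\ge c_{2,t}n^{t+1}$, so $F(n)/n^{C}\to\infty$. Also $nL(3L+2)+2n+3L+3=(3+o(1))n(\log_2 n)^2$. So the real content is the two upper bounds on $f(n)$. Each needs an explicit $2$-colouring of $2^{[n]}$ under which every monochromatic sublattice is small.

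The guiding observation is structural. If $\cL$ is a sublattice and $A,B\in\cL$ are incomparable, then $A\cap B\subsetneq A,B\subsetneq A\cup B$ all lie in $\cL$, forming a ``square''. A sublattice with no such square is a chain, and a chain in $2^{[n]}$ has at most $n+1$ members. Colourings depending only on $|A|$ are useless here, because van der Waerden type arguments of the kind in Section~\ref{sec:card} produce monochromatic squares with sizes $a,a+x,a+y,a+x+y$. I would therefore colour using the lattice-homomorphism-like statistics $\min A$ and $\max A$. These satisfy $\max(A\cup B)=\max(\max A,\max B)$ and $\min(A\cap B)\ge\max(\min A,\min B)$. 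For the first bound I would take $\chi(A)$ to be the parity of a suitable combination of $\max A$ and $\min A$ (for instance $\max A+\min A$, with conventions for $\emptyset$). The plan is then to show the following. Inside a monochromatic sublattice, the members sharing a given pair $(\min A,\max A)$ form a chain, since a square among them would force a colour change at its top or bottom. Moreover the pairs that occur are themselves totally ordered in a way that limits their number to about $n$. Summing chains of length at most $n+1$ over at most $n$ classes, plus the extra class containing $\emptyset$, should give $n(n+1)+1=n^2+n+1$.

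For the sharper bound I would refine the colouring using the binary expansions of $\max A$ and $\min A$, which have $L=\lfloor\log_2 n\rfloor+1$ digits. A natural choice is the parity of the number of the highest binary digit where $\min A$ and $\max A$ differ. Joins and meets then move this index monotonically, so along a monochromatic sublattice it can take only $O(L)$ values. Running the same chain decomposition at each of the $L$ scales, with about $3L+2$ classes per element and linear-size chains, gives $nL(3L+2)$. The terms $2n+3L+3$ account for boundary classes: the empty set, singletons, and sets with $\min A=\max A$.

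The main obstacle is the key step in each case: proving that squares are impossible inside a single colour class, and hence that the monochromatic sublattice decomposes into few chains. My first attempt would be a direct case analysis on how $\min$ and $\max$ of $A\cap B$ and $A\cup B$ relate to those of $A$ and $B$. If the chosen parity statistic cannot be shown to change across every square, I would adjust the colouring, for example by adding the parity of $|A|$, until that case analysis closes. I am least sure of the exact colouring, and the constants above are matched to the stated bound rather than derived.
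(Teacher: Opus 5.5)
\textbf{The key step fails.} Your reduction to the two upper bounds and your handling of the closing comparison are fine, but the key step fails for every colouring of the kind you propose, so no adjustment of the case analysis can close it. Suppose $A,B$ have the same minimum $a$ and maximum $b$. Then $A\cap B$ and $A\cup B$ both contain $a$ and $b$ and lie in $\{a,\dots,b\}$, so the whole square shares the pair $(\min,\max)$. Hence it gets one colour under any colouring that depends only on $(\min A,\max A)$. This covers both $\max A+\min A \bmod 2$ and your binary-digit refinement. Indeed such a colouring is constant on $\{S:\ \{1,n\}\subseteq S\subseteq[n]\}$, which is a sublattice with $2^{n-2}$ members.

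Adding $|A|$ modulo a fixed $q$ does not help. The cube with base $\{1,n\}$ and $\lfloor(n-2)/q\rfloor$ pairwise disjoint generators of size $q$ inside $\{2,\dots,n-1\}$ is a monochromatic sublattice with $2^{\lfloor(n-2)/q\rfloor}$ members. A colouring witnessing $f(n)\le n^{2}+n+1$ can have no monochromatic $t$-cube with $2^{t}>n^{2}+n+1$. So it must avoid monochromatic combinatorial subspaces of dimension about $2\log_2 n$, and colourings built from a few order statistics and size residues are far from this. Your class counts ($n$ chains of length $n+1$, $3L+2$ classes per scale) were, as you say, fitted to the stated bound, and there is nothing behind them.

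\textbf{The missing idea.} No explicit colouring is needed. The paper takes a uniformly random $2$-colouring and a union bound. A fixed family of $N$ distinct sets is monochromatic under exactly $2\cdot2^{2^{n}-N}$ colourings, so everything reduces to counting sublattices.

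For a nonempty sublattice $\cL$ with least member $B$ and greatest member $T$, let $A_x$ be the least member containing $x\in T\setminus B$. Every member has the form $B\cup\bigcup_{x\in X}A_x$, and every such set lies in $\cL$. So $\cL$ is determined by $B$ and the map $x\mapsto A_x$, which gives at most $2^{n(n+1)}$ sublattices; taking $N=n^{2}+n+2$ yields the first bound.

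For the second bound, write $A_x=B\cup[x]\cup A_{R_x}$. Here the class $[x]=\{y:\ A_y=A_x\}$ is coded by its least element. The set $R_x$ is a smallest set of points $y$ with $A_y\subsetneq A_x$ whose sets $A_y$ have the same union as all such sets. The $2^{|R_x|}$ sets $B\cup A_Y$ with $Y\subseteq R_x$ are distinct members, so $|R_x|\le w=\lfloor\log_2|\cL|\rfloor$. Induction on $|A_x|$ shows that $B$, $T$, the class representatives and the sets $R_x$ determine $\cL$. This gives at most $2^{2n}n^{n}(n+1)^{wn}$ sublattices for each $w$. Summing the union bound over $w$ against the threshold $E(w)=n(L(w+1)+2)+w+3$ gives $f(n)\le nL(3L+2)+2n+3L+3$.
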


\subsection{Colourings by cardinality, Hilbert cubes and progressions}

Howorka's case of the problem is arithmetic in nature. Call $\chi$ a
\emph{cardinality colouring} if $\chi(S)=g(|S|)$ for some
$g:\{0,1,\dots,n\}\to[k]$. The building blocks of our families are the
\emph{$t$-cubes} $\{U\cup X_I:\ I\subseteq[t]\}$, where $U,X_1,\dots,X_t$
are pairwise disjoint, the $X_i$ are nonempty and
$X_I=\bigcup_{i\in I}X_i$. The sizes of the members of such a cube form
the \emph{Hilbert cube}
\[
H(u;a_1,\dots,a_t)=\Bigl\{u+\sum_{i\in I}a_i:\ I\subseteq[t]\Bigr\},
\qquad u=|U|,\ a_i=|X_i|,
\]
so a cube is monochromatic under a cardinality colouring exactly when $g$
is constant on the corresponding Hilbert cube. Hilbert's lemma
\cite{Hi1892}, proved in the course of Hilbert's work on the
irreducibility of polynomials with integer coefficients and usually
regarded as the first theorem of Ramsey theory, says that every finite
colouring of the positive integers has monochromatic Hilbert cubes of
every dimension. Our argument gives it with an explicit window
(Corollary~\ref{cor:hilbertlemma}): every $k$-colouring of
$\{r,r+1,\dots,r+t^{2}k^{2^{t}}\}$ is constant on a $t$-dimensional
Hilbert cube inside this interval. Read through the dictionary between
cubes of sets and cubes of integers, the proof of Theorem~\ref{thm:lower}
produces many monochromatic Hilbert cubes spread along a single sequence
of bounded steps.

\begin{theorem}
\label{thm:hilbert}
Let $k\ge2$ and $t\ge1$, put $M=t^{2}k^{2^{t}}$, let $n\ge4M(t+1)$, and
put $m=\lfloor(n-1)/M\rfloor$ and $s=\lfloor m/2\rfloor$. For every
$g:\{0,1,\dots,n\}\to[k]$ there are integers
$\beta_1,\dots,\beta_m\in[M]$ with $\beta_1+\dots+\beta_m<n$ and a colour
$c$ with the following property. For at least
\[
\frac{n^{t+1}}{k\,t^{t}\,(4M^{2})^{t+1}}
\]
of the pairs $(j,A)$ with $s<j\le m$, $A\subseteq[s]$ and $|A|=t$, the
function $g$ takes the value $c$ at every point of the Hilbert cube
\[
H_\beta(j,A)=\Bigl\{\beta_1+\beta_2+\dots+\beta_j-\sum_{b\in B}\beta_b:\
B\subseteq A\Bigr\}.
\]
\end{theorem}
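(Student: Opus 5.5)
The plan is a probabilistic choice of the steps $\beta_1,\dots,\beta_m$, followed by averaging and a pigeonhole over the colour. The only Ramsey input is a \emph{window lemma}: every $k$-colouring of an interval of $M=t^2k^{2^t}$ consecutive integers is constant on some Hilbert cube $H(u;a_1,\dots,a_t)$ contained in that interval. This is the statement announced as Corollary~\ref{cor:hilbertlemma}. I would prove it directly, by the standard iterated pigeonhole. One splits the interval into blocks and finds two blocks with the same colour pattern. Their difference $a_1$ lets the argument pass to a pattern on a shorter interval, now coloured by $k^2$ colours (the pair of colours at $x$ and $x+a_1$). After $t$ rounds the colour count has grown to $k^{2^t}$, which is where $M=t^2k^{2^t}$ comes from. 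Since the cube lies in a window of length $M$, its base offset and all of its steps lie in $[M]$.

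Next, rewrite the cubes. For $j>s$ and $A\subseteq[s]$ with $|A|=t$, put $x=\beta_1+\dots+\beta_j-\sum_{b\in A}\beta_b$. Then
\[
H_\beta(j,A)=\Bigl\{x+\sum_{b\in C}\beta_b:\ C\subseteq A\Bigr\}=H\bigl(x;(\beta_b)_{b\in A}\bigr).
\]
Now choose $\beta_1,\dots,\beta_m$ independently and uniformly from $[M]$. Since $mM\le n-1$, the requirement $\beta_1+\dots+\beta_m<n$ holds automatically, and every cube lies in $\{0,\dots,n\}$.

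Fix a pair $(j,A)$ and condition on all $\beta_i$ with $i\notin A\cup\{j\}$. Note that $j\notin A$, because $j>s$. The base becomes $x=Q+\beta_j$ for a fixed $Q$, and the steps $(\beta_b)_{b\in A}$ are independent uniform elements of $[M]$. Apply the window lemma to $g$ restricted to $\{Q+1,\dots,Q+M\}$. It gives a monochromatic cube $H(u;a_1,\dots,a_t)$ with $u-Q\in[M]$ and $a_i\in[M]$; order $A$ increasingly to match the steps. The event $\beta_j=u-Q$ and $(\beta_b)_{b\in A}=(a_1,\dots,a_t)$ has probability $M^{-(t+1)}$, and on this event $H_\beta(j,A)$ is monochromatic.

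By linearity of expectation, the expected number of monochromatic pairs is at least
\[
(m-s)\binom{s}{t}M^{-(t+1)}\ \ge\ \frac{m}{2}\Bigl(\frac{s}{t}\Bigr)^tM^{-(t+1)}.
\]
The hypothesis $n\ge4M(t+1)$ gives $m\ge n/(2M)$ and $s\ge n/(4M)$, so this quantity is at least
\[
\frac{n^{t+1}}{t^t\,(4M)^{t+1}M^{t+1}}\ \ge\ \frac{n^{t+1}}{t^t(4M^2)^{t+1}}.
\]
Fix a choice of $\beta$ attaining at least the expectation. Each monochromatic pair carries one of $k$ colours, so some colour $c$ accounts for at least a $1/k$ fraction of them, which is the stated bound.

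I expect the main obstacle to be the window lemma with exactly the window $t^2k^{2^t}$, and with the cube sitting inside the interval so that all steps and the base offset are at most $M$. The rest is bookkeeping. In particular, the cube elements of $H_\beta(j,A)$ must be genuinely distinct as the paper's cubes require; this holds because the $\beta_b\ge1$ and the cube is indexed by subsets. One must also check that the conditioning is legitimate, which it is because $x$ depends on $\beta_j$ only through a shift.
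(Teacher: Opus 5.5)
Your reduction is the paper's proof.
\begin{itemize}
\item With the cardinality colouring $\chi(S)=g(|S|)$, \eqref{eq:rho01} says that $\beta$ is good for $(j,A)$ exactly when $g$ is constant on $H_\beta(j,A)$.
\item Lemma~\ref{lem:slices} is your conditioning on the coordinates outside $A\cup\{j\}$, with the window placed just above $Q=\sum_{i<j,\,i\notin A}\beta_i$.
\item The averaging over $[M]^m$, the pigeonhole over colours and the estimate of $|\cP|$ go through as you write them, and your bounds $m\ge n/(2M)$ and $s\ge n/(4M)$ are correct.
\end{itemize}
If you simply quote Corollary~\ref{cor:hilbertlemma} (equivalently Proposition~\ref{prop:saturation}), your proof is the paper's.

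The gap is your plan to prove the window lemma directly by iterated pigeonhole. The sketch does not give the window $t^{2}k^{2^{t}}$.

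\emph{Block matching.} This step on its own does produce cubes. But guaranteeing two blocks of length $b$ with the same colour pattern needs $k^{b}+1$ blocks. Each round therefore turns a length $b$ into roughly $bk^{b}$, which is tower-type growth.

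\emph{Pair colouring.} The $k^{2}$-colouring $x\mapsto(g(x),g(x+a_1))$ is meant to keep the growth at $k^{2^{t}}$, but it does not fit with block matching. If the blocks agree, then $g(x)=g(x+a_1)$ and the number of colours does not square. If you drop the agreement, a $(t-1)$-cube $H$ that is monochromatic for the pair colouring only gives $g\equiv c_1$ on $H$ and $g\equiv c_2$ on $H+a_1$. Nothing forces $c_1=c_2$.

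\emph{Density increment.} One could instead pass from a colour class $A$ to $A\cap(A-d)$ for a popular $d\ge1$. This squares the density each round, but in its simple form it loses a constant factor per round, and those losses are squared again later. It does not reach $t^{2}k^{2^{t}}$ without considerably more care.

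The paper gets its window from the box inequality, Lemma~\ref{lem:box}, applied to all $t$ chains at once. Degenerate boxes then cost only the additive term $t/(L+1)$ in Proposition~\ref{prop:saturation}.

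To close the gap, cite Corollary~\ref{cor:hilbertlemma}. Note that it is stated for the $M+1$ points $\{r,\dots,r+M\}$, not for $M$ consecutive integers. Apply it with $r=Q+1$:
\begin{itemize}
\item $u-Q\in[1,M-t+1]\subseteq[M]$ and each $a_i\le M$;
\item $Q+1+M\le(m-t)M+1\le n-1$, so $g$ is defined on the whole window.
\end{itemize}

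Finally, your remark that the elements of $H_\beta(j,A)$ are distinct is false: two equal steps $\beta_b=\beta_{b'}$ with $b\ne b'$ in $A$ already give a repeated value. The theorem does not need it, since it only asks that $g$ take the value $c$ at every point of the cube.
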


Different pairs $(j,A)$ may give the same set of integers, and there are
at most $(n+1)M^{t}$ Hilbert cubes $H(u;a_1,\dots,a_t)$ in
$\{0,\dots,n\}$ with all $a_i\le M$. What Theorem~\ref{thm:hilbert}
provides is one sequence $\beta$ for which a fixed proportion of the cubes
indexed by the pairs $(j,A)$ have a common colour, all of them hanging
from the partial sums $\beta_1+\dots+\beta_j$. This is the structure that
lets the corresponding families of sets be glued into one union-closed
family. Section~\ref{sec:card} also shows that a monochromatic arithmetic
progression of sizes yields a large monochromatic union-closed family and
a monochromatic sublattice, and derives Howorka's theorem from van der
Waerden's theorem with an explicit bound (Theorem~\ref{thm:howorka}).

\subsection{The idea of the proof of Theorem~\ref{thm:lower}}

Chains and cubes are monochromatic union-closed families that are easy to
find, but a chain has at most $n+1$ members and a single cube is too
rigid to be forced to be large. The families we build are unions of many
cubes that fit together. Take pairwise disjoint nonempty blocks
$Y_1,\dots,Y_m$, put $C_j=Y_1\cup\dots\cup Y_j$, and for $j>s$ and
$A\subseteq[s]$ consider the cube $Q(j,A)=\{C_j\setminus Y_B:\ B\subseteq
A\}$. The union of a member of $Q(j,A)$ and a member of $Q(j',A')$ with
$j\le j'$ is again a member of $Q(j',A')$, so the union of any collection
of cubes $Q(j,A)$ that are monochromatic in a common colour is
union-closed. There are about $m^{t+1}$ pairs $(j,A)$ with $|A|=t$, and it
is therefore enough to make a fixed proportion of these cubes
monochromatic.

For a fixed chain of blocks this cannot be arranged, since the colouring
may have been chosen against it. We therefore randomise twice: the ground
set is permuted by a uniformly random $\sigma$, and each block size is
chosen at random from $\{1,\dots,M\}$. After a random permutation, the
probability that a cube is monochromatic depends only on the sizes of its
parts (Lemma~\ref{lem:shape}). The central step is a supersaturation
statement: for every $r$, some shape whose base size lies between $r$ and
$r+M$ is monochromatic with probability at least $k^{-2^{t}}$
(Proposition~\ref{prop:saturation}). It follows from an inequality of
Cauchy--Schwarz type for products over the vertices of a box
(Lemma~\ref{lem:box}), applied to the sets obtained by running along $t$
disjoint chains of elements (Figure~\ref{fig:box}). The shape of $Q(j,A)$
is governed by $t+1$ block sizes, those of $Y_j$ and of $Y_a$ for $a\in
A$, and these can be tuned independently of all the others
(Lemma~\ref{lem:slices}). Averaging over $\sigma$ and the block sizes then
produces one chain of blocks in which a proportion $k^{-2^{t}}M^{-t-1}$ of
all the cubes is monochromatic (Lemma~\ref{lem:average}).
Figure~\ref{fig:proofmap} shows how the pieces are assembled.

\begin{figure}[t]
\centering
\begin{tikzpicture}[
    box/.style={draw=black!70, line width=0.6pt, rounded corners=3pt, fill=#1,
      text width=10cm, align=flush left, inner sep=5pt, font=\small},
    box/.default=white,
    arr/.style={-{Stealth[length=6pt]}, line width=0.8pt}]
  \node[box=gray!8] (box) {\textbf{Lemma~\ref{lem:box}.} For $f\ge0$ on $G^{t}$, the product of
    $f$ over the $2^{t}$ vertices of a random box has mean at least $(\mathbb{E}f)^{2^{t}}$; so a
    random box in $\{0,\dots,L\}^{t}$ is monochromatic with probability at least $k^{1-2^{t}}$.};
  \node[box=teal!8, below=8mm of box] (sat) {\textbf{Proposition~\ref{prop:saturation}.} For every
    $r\le n-M$ some shape $(u;a_1,\dots,a_t)$ with $r\le u\le r+M-t$ satisfies
    $\rho_\chi(u;a)\ge k^{-2^{t}}$.};
  \node[box=teal!13, below=8mm of sat] (slice) {\textbf{Lemma~\ref{lem:slices}.} For each level $j$
    and generator set $A$, at least $M^{m-t-1}$ of the $M^m$ size vectors $\beta$ give the cube
    $Q(j,A)$ a good shape.};
  \node[box=teal!18, below=8mm of slice] (avg) {\textbf{Lemma~\ref{lem:average}.} Some ordering
    $\sigma$ and sizes $\beta$ make at least \mbox{$|\mathcal P|/(k^{2^{t}}M^{t+1})$} of the cubes
    monochromatic at once.};
  \node[box=teal!24, below=8mm of avg] (glue) {\textbf{Proposition~\ref{prop:glue}.} Monochromatic
    cubes of one block chain glue into a monochromatic union-closed family, so
    \mbox{$F_k(n)\ge|\mathcal P|/(k^{2^{t}+1}M^{t+1})\ge c_{k,t}\,n^{t+1}$}.};
  \foreach \a/\b/\t in {box/sat/{run along $t$ chains of $L$ points; discard degenerate boxes},
      sat/slice/{tune $t+1$ block sizes, fix the rest},
      slice/avg/{average over $(\sigma,\beta)$},
      avg/glue/{union rule, Lemma~\ref{lem:union}}}
    \draw[arr] ($(\a.south west)!0.14!(\a.south east)$) --
      node[right=4pt, font=\footnotesize\itshape, text=black!70] {\t}
      ($(\b.north west)!0.14!(\b.north east)$);
\end{tikzpicture}
\caption{The structure of the proof of Theorem~\ref{thm:lower}. Here
$\cP$ is the set of pairs $(j,A)$ with $s<j\le m$, $A\subseteq[s]$ and
$|A|=t$, and $\rho_\chi$ is defined after Lemma~\ref{lem:shape}.}
\label{fig:proofmap}
\end{figure}

\subsection{Related work}

The union-closed families in this paper should not be confused with
those of Frankl's union-closed sets conjecture, which asks whether every
union-closed family other than $\{\emptyset\}$ has an element lying in at
least half of its members. That problem concerns the internal structure
of one family and involves no colouring. Gilmer \cite{Gi22} proved that
some element lies in a constant proportion of the members, and Alweiss,
Huang and Sellke \cite{AHS24} improved the constant. The entropy method
behind these results compares the union of two independent random
members of a family with a single random member, and we see no analogue
of it here, where the family is not given in advance.

Families of $2^{d}$ sets of the form $\{U\cup X_I\}$ are called
$d$-dimensional Boolean algebras by Gunderson, R\"odl and Sidorenko
\cite{GRS99}, who studied their extremal theory alongside that of
$d$-dimensional affine cubes of integers and complete $d$-partite
$d$-uniform hypergraphs. The passage between cubes of sets and Hilbert
cubes of integers used in Section~\ref{sec:card} is of the same kind.
Proposition~\ref{prop:saturation} contains the two-letter case of the
multidimensional Hales--Jewett theorem \cite{HJ63} with an explicit bound
(Remark~\ref{rem:hj}); see \cite{GRS90} for this theorem and for the
background in Ramsey theory, including van der Waerden's theorem
\cite{vdW27}. The box inequality behind it is the simplest instance of the
repeated use of the Cauchy--Schwarz inequality introduced by Gowers
\cite{Go01}. The upper bound rests on the Sauer--Shelah lemma
\cite{Sa72,Sh72}, which is also at the root of the theory of
Vapnik--Chervonenkis dimension \cite{VC71}.

\subsection{Formal verification}

All theorems, propositions, corollaries and lemmas of
Sections~\ref{sec:cubes}--\ref{sec:f} have been formalised and checked in
the Lean proof assistant \cite{MU21} with the Mathlib library
\cite{Mathlib20}. The functions $f$, $F$ and $F_k$ are defined in the
formal development literally as above, the two conjectures are stated in
the form \eqref{eq:conj} with the term $o(1)$ written out as a function
tending to zero, and Lean reports that the proof of each formal statement
uses only the three standard axioms of the system.
Section~\ref{sec:lean} describes the formalisation.

\subsection{Organisation}

Section~\ref{sec:cubes} introduces cubes and block chains and proves that
monochromatic cubes of one chain can be glued. Section~\ref{sec:cube}
proves the box inequality and the supersaturation statement, and
Section~\ref{sec:lower} completes the proof of Theorem~\ref{thm:lower}.
Section~\ref{sec:upper} proves Theorem~\ref{thm:upper} and
Corollary~\ref{cor:conj} and studies the random colouring.
Section~\ref{sec:card} treats cardinality colourings, Hilbert cubes and
arithmetic progressions, and Section~\ref{sec:f} the lattice function $f$.
Section~\ref{sec:lean} describes the formalisation, and
Section~\ref{sec:questions} collects open questions.

\section{Cubes and block chains}
\label{sec:cubes}

Throughout, $k\ge2$ and $\chi$ is a $k$-colouring of $2^{[n]}$, and $S_n$
is the group of permutations of $[n]$. A permutation acts on sets and on
families of sets in the obvious way: $\sigma(S)=\{\sigma(x):x\in S\}$ and
$\sigma(\cF)=\{\sigma(S):S\in\cF\}$. For sets $X_1,X_2,\dots$ and a finite
set $I$ of indices we write $X_I=\bigcup_{i\in I}X_i$, so that
$X_\emptyset=\emptyset$.

\begin{definition}
\label{def:cube}
Let $t\ge0$. A \emph{$t$-cube} in $2^{[n]}$ is a family
\[
Q(U;X_1,\dots,X_t)=\{U\cup X_I:\ I\subseteq[t]\},
\]
where $U,X_1,\dots,X_t\subseteq[n]$ are pairwise disjoint and
$X_1,\dots,X_t$ are nonempty. We call $U$ the \emph{base},
$T=U\cup X_{[t]}$ the \emph{top} and $X_1,\dots,X_t$ the
\emph{generators}. The \emph{shape} of the cube is the tuple
$(|U|;|X_1|,\dots,|X_t|)$.
\end{definition}

The $2^{t}$ sets $U\cup X_I$ are distinct, and
\[
(U\cup X_I)\cup(U\cup X_J)=U\cup X_{I\cup J},\qquad
(U\cup X_I)\cap(U\cup X_J)=U\cup X_{I\cap J},
\]
so a $t$-cube is a sublattice of $2^{[n]}$ isomorphic to $2^{[t]}$.
Writing $U\cup X_I=T\setminus X_{[t]\setminus I}$ describes the same
cube from the top downwards, as the family $\{T\setminus X_I:\
I\subseteq[t]\}$; this is the form in which cubes appear in block chains.
Figure~\ref{fig:holecube} shows a $3$-cube in this form.

\begin{figure}[t]
\centering
\begin{tikzpicture}[
    x={(3.3cm,0cm)}, y={(1.45cm,1.0cm)}, z={(0cm,2.9cm)},
    v/.style={circle, draw=black!80, line width=0.7pt, inner sep=0pt, minimum size=8pt},
    lab/.style={font=\small, inner sep=2pt},
    elab/.style={font=\footnotesize, inner sep=1.5pt, fill=white, fill opacity=0.85, text opacity=1}]
  \fill[teal!6]  (0,0,0) -- (1,0,0) -- (1,0,1) -- (0,0,1) -- cycle;
  \fill[teal!12] (0,0,1) -- (1,0,1) -- (1,1,1) -- (0,1,1) -- cycle;
  \fill[teal!18] (1,0,0) -- (1,1,0) -- (1,1,1) -- (1,0,1) -- cycle;
  \draw[dashed, gray!70] (0,0,0) -- (0,1,0) -- (1,1,0);
  \draw[dashed, gray!70] (0,1,0) -- (0,1,1);
  \draw[black!70, line width=0.7pt]
    (0,0,0) -- (1,0,0) -- (1,1,0) -- (1,1,1) -- (0,1,1) -- (0,0,1) -- cycle
    (1,0,0) -- (1,0,1) -- (1,1,1)
    (0,0,1) -- (1,0,1);
  \node[v, fill=teal!15] (U)   at (0,0,0) {};
  \node[v, fill=teal!40] (x1)  at (1,0,0) {};
  \node[v, fill=teal!40] (x2)  at (0,1,0) {};
  \node[v, fill=teal!40] (x3)  at (0,0,1) {};
  \node[v, fill=teal!70] (x12) at (1,1,0) {};
  \node[v, fill=teal!70] (x13) at (1,0,1) {};
  \node[v, fill=teal!70] (x23) at (0,1,1) {};
  \node[v, fill=teal!90!black] (T) at (1,1,1) {};
  \node[lab, below left=1pt]  at (U)   {$U=T\setminus X_{123}$};
  \node[lab, below right=1pt] at (x1)  {$T\setminus X_{23}$};
  \node[lab, above right=1pt] at (x2)  {$T\setminus X_{13}$};
  \node[lab, left=3pt]        at (x3)  {$T\setminus X_{12}$};
  \node[lab, right=3pt]       at (x12) {$T\setminus X_{3}$};
  \node[lab, below left=1pt]  at (x13) {$T\setminus X_{2}$};
  \node[lab, above left=1pt]  at (x23) {$T\setminus X_{1}$};
  \node[lab, above right=1pt] at (T)   {$T$};
  \node[elab, below=2pt] at (0.5,0,0) {$\cup\,X_1$};
  \node[elab, left=3pt]  at (0,0,0.5) {$\cup\,X_3$};
  \node[elab, fill=none, above left=0pt] at (0,0.5,0) {$\cup\,X_2$};
\end{tikzpicture}
\caption{A $3$-cube with top $T$ and generators $X_1,X_2,X_3$, where
$X_{12}=X_1\cup X_2$ and so on. Moving along an edge in direction $i$
adds or removes $X_i$. The vertex $U=T\setminus X_{123}$ is the base.}
\label{fig:holecube}
\end{figure}

\begin{lemma}
\label{lem:shape}
If $Q$ and $Q'$ are $t$-cubes in $2^{[n]}$ of the same shape, then
\begin{multline*}
\bigl|\{\sigma\in S_n:\ \sigma(Q)\text{ is monochromatic}\}\bigr|\\
=\bigl|\{\sigma\in S_n:\ \sigma(Q')\text{ is monochromatic}\}\bigr| .
\end{multline*}
\end{lemma}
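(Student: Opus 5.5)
The plan is to exhibit a permutation $\tau\in S_n$ with $\tau(Q)=Q'$ and then use that right multiplication by $\tau$ is a bijection of $S_n$. Write $Q=Q(U;X_1,\dots,X_t)$ and $Q'=Q(U';X'_1,\dots,X'_t)$ with $|U|=|U'|$ and $|X_i|=|X'_i|$ for every $i$.

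First I construct $\tau$. The sets $U,X_1,\dots,X_t$ are pairwise disjoint, so together with the complement $R=[n]\setminus(U\cup X_{[t]})$ they partition $[n]$. Likewise $U',X'_1,\dots,X'_t$ and $R'=[n]\setminus(U'\cup X'_{[t]})$ partition $[n]$. Corresponding parts have equal sizes; for $R$ and $R'$ this follows by subtraction from $n$. I choose bijections $U\to U'$, $X_i\to X'_i$ and $R\to R'$, and glue them into a single $\tau\in S_n$. Then $\tau(U\cup X_I)=\tau(U)\cup\bigcup_{i\in I}\tau(X_i)=U'\cup X'_I$ for every $I\subseteq[t]$, so $\tau(Q)=Q'$.

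Next I define the map $\Phi:S_n\to S_n$ by $\Phi(\sigma)=\sigma\circ\tau^{-1}$, which is a bijection. The action on sets is compatible with composition, $(\sigma\tau^{-1})(S)=\sigma(\tau^{-1}(S))$, and hence the same holds on families. Therefore
\[
\Phi(\sigma)(Q')=\sigma(\tau^{-1}(Q'))=\sigma(Q).
\]
So $\sigma(Q)$ is monochromatic if and only if $\Phi(\sigma)(Q')$ is monochromatic. This means $\Phi$ maps the first set in the statement bijectively onto the second, which gives the claimed equality of cardinalities.

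There is no real obstacle here. The only points that need care are that the complements $R$ and $R'$ have equal size, so that the glued map $\tau$ is a genuine permutation of $[n]$, and that the action respects composition, so that $\Phi$ really carries one set onto the other.
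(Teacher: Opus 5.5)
Your proposal is correct and is essentially the paper's proof. Both build $\tau$ by matching the partition $U,X_1,\dots,X_t,[n]\setminus T$ with its primed counterpart and then use right multiplication by a fixed permutation as a bijection of $S_n$; your $\sigma\mapsto\sigma\tau^{-1}$ is just the inverse of the paper's $\sigma\mapsto\sigma\tau$.
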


\begin{proof}
Let $Q=Q(U;X_1,\dots,X_t)$ and $Q'=Q(U';X'_1,\dots,X'_t)$ with tops $T$
and $T'$. The sets $U,X_1,\dots,X_t,[n]\setminus T$ partition $[n]$, as do
$U',X'_1,\dots,X'_t,[n]\setminus T'$, and corresponding parts have the
same size. So there is $\tau\in S_n$ with $\tau(U)=U'$ and
$\tau(X_i)=X'_i$ for every $i$, and then $\tau(Q)=Q'$. Now
$\sigma(Q')=(\sigma\tau)(Q)$, and $\sigma\mapsto\sigma\tau$ is a bijection
of $S_n$.
\end{proof}

For integers $u\ge0$ and $a_1,\dots,a_t\ge1$ with
$u+a_1+\dots+a_t\le n$ we may therefore define
\[
\rho_\chi(u;a_1,\dots,a_t)=\frac{1}{n!}\,
\bigl|\{\sigma\in S_n:\ \sigma(Q)\text{ is monochromatic}\}\bigr|,
\]
where $Q$ is any $t$-cube of shape $(u;a_1,\dots,a_t)$. Since $\sigma(Q)$
is a uniformly random cube of that shape when $\sigma$ is uniform,
$\rho_\chi(u;a)$ is the probability that a random cube of shape $(u;a)$ is
monochromatic.

\begin{definition}
\label{def:chain}
Let $Y_1,\dots,Y_m\subseteq[n]$ be pairwise disjoint nonempty sets, the
\emph{blocks}, and put $C_j=Y_1\cup\dots\cup Y_j$ for $0\le j\le m$. For
$1\le j\le m$ and $A\subseteq[j-1]$ let
\[
Q(j,A)=\{C_j\setminus Y_B:\ B\subseteq A\}.
\]
This is the $|A|$-cube with top $C_j$, generators $Y_a$ $(a\in A)$ and base
$C_j\setminus Y_A$; its shape is
$\bigl(\sum_{i\le j,\,i\notin A}|Y_i|;\ (|Y_a|)_{a\in A}\bigr)$, the
generators being listed in increasing order of $a$.
\end{definition}

Figure~\ref{fig:chain} shows a member of such a cube. The chain
structure enters through the following rule, illustrated in
Figure~\ref{fig:union}.

\begin{figure}[t]
\centering
\begin{tikzpicture}[cell/.style={minimum size=0.42cm, inner sep=0pt, outer sep=0pt}]
  \def\M{6}
  \def\colsep{0.78}
  \foreach \k/\b in {1/4,2/2,3/5,4/3,5/6,6/2,7/4,8/5,9/3,10/4} {
    \pgfmathsetmacro{\xx}{(\k-1)*\colsep}
    \foreach \r in {1,...,\M} {
      \draw[gray!45, line width=0.3pt] (\xx,{(\r-1)*0.42}) rectangle ++(0.42,0.42);
    }
    \ifnum\k=4
      \foreach \r in {1,...,\b} {
        \draw[red!70!black, dashed, line width=0.7pt, pattern=north east lines,
          pattern color=red!50] (\xx,{(\r-1)*0.42}) rectangle ++(0.42,0.42);
      }
    \else\ifnum\k=2
      \foreach \r in {1,...,\b} {
        \draw[black, line width=0.9pt, fill=teal!55] (\xx,{(\r-1)*0.42}) rectangle ++(0.42,0.42);
      }
    \else\ifnum\k>8
      \foreach \r in {1,...,\b} {
        \draw[gray!70, line width=0.4pt, fill=gray!20] (\xx,{(\r-1)*0.42}) rectangle ++(0.42,0.42);
      }
    \else
      \foreach \r in {1,...,\b} {
        \draw[teal!50!black, line width=0.4pt, fill=teal!30] (\xx,{(\r-1)*0.42}) rectangle ++(0.42,0.42);
      }
    \fi\fi\fi
    \node[font=\scriptsize, below] at ({\xx+0.21},-0.02) {$\k$};
  }
  \draw[decorate, decoration={brace, amplitude=5pt, mirror}, line width=0.6pt]
    (0,-0.42) -- ({4*\colsep+0.42},-0.42)
    node[midway, below=6pt, font=\footnotesize] {indices $1,\dots,s$: generators};
  \draw[decorate, decoration={brace, amplitude=5pt, mirror}, line width=0.6pt]
    ({5*\colsep},-0.42) -- ({9*\colsep+0.42},-0.42)
    node[midway, below=6pt, font=\footnotesize] {indices $s+1,\dots,m$: levels};
  \draw[-{Stealth[length=5pt]}, line width=0.7pt] ({7*\colsep+0.21},{\M*0.42+0.75}) --
    ({7*\colsep+0.21},{\M*0.42+0.12}) node[pos=0, above, font=\footnotesize] {level $j$};
  \draw[decorate, decoration={brace, amplitude=4pt}, line width=0.5pt]
    ({-0.08},0) -- ({-0.08},{\M*0.42}) node[midway, left=5pt, font=\footnotesize] {$G_k$, $|G_k|=M$};
  \draw[decorate, decoration={brace, amplitude=4pt, mirror}, line width=0.5pt]
    ({9*\colsep+0.5},0) -- ({9*\colsep+0.5},{4*0.42}) node[midway, right=5pt, font=\footnotesize] {$|Y_k|=\beta_k$};
  \begin{scope}[shift={(0.2,-1.85)}]
    \draw[teal!50!black, fill=teal!30, line width=0.4pt] (0,0) rectangle ++(0.34,0.34);
    \node[right, font=\footnotesize] at (0.4,0.17) {block of $C_j\setminus Y_B$};
    \draw[black, fill=teal!55, line width=0.9pt] (4.1,0) rectangle ++(0.34,0.34);
    \node[right, font=\footnotesize] at (4.5,0.17) {generator in $A\setminus B$, kept};
    \draw[red!70!black, dashed, pattern=north east lines, pattern color=red!50, line width=0.7pt]
      (0,-0.55) rectangle ++(0.34,0.34);
    \node[right, font=\footnotesize] at (0.4,-0.38) {generator in $B$, removed};
    \draw[gray!70, fill=gray!20, line width=0.4pt] (4.1,-0.55) rectangle ++(0.34,0.34);
    \node[right, font=\footnotesize] at (4.5,-0.38) {block beyond level $j$};
  \end{scope}
\end{tikzpicture}
\caption{A block chain with $m=10$ blocks placed in slots $G_1,\dots,G_{10}$
of size $M=6$, with $|Y_k|=\beta_k$ and
$\beta=(4,2,5,3,6,2,4,5,3,4)$. Here $s=5$, and the teal blocks form the
member $C_8\setminus Y_{\{4\}}$ of the cube $Q(8,\{2,4\})$; the hatched
block $Y_4$ is removed and the grey blocks lie beyond $C_8$.}
\label{fig:chain}
\end{figure}

\begin{lemma}[Union rule]
\label{lem:union}
Let $1\le j\le j'\le m$ and $B,B'\subseteq[j]$. Then
\[
(C_j\setminus Y_B)\cup(C_{j'}\setminus Y_{B'})=C_{j'}\setminus
Y_{B\cap B'} .
\]
\end{lemma}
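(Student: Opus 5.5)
We verify the identity elementwise, using only that the blocks $Y_1,\dots,Y_m$ are pairwise disjoint and that $C_j\subseteq C_{j'}$ for $j\le j'$. Since $B,B'\subseteq[j]\subseteq[j']$, we have $Y_B\subseteq C_j$ and $Y_{B'}\subseteq C_{j'}$. So both sides are subsets of $C_{j'}$: the left side because $C_j\setminus Y_B\subseteq C_j\subseteq C_{j'}$, the right side by definition. It therefore suffices to take $x\in C_{j'}$ and check that $x$ lies in the left side exactly when $x\notin Y_{B\cap B'}$.

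Every $x\in C_{j'}$ lies in exactly one block $Y_i$ with $i\le j'$, by disjointness. Hence $x\in Y_B$ if and only if $i\in B$, and similarly for $B'$ and for $B\cap B'$. We split into two cases according to $i$.

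\emph{Case $i>j$.} Then $i\notin B'$, since $B'\subseteq[j]$, so $x\in C_{j'}\setminus Y_{B'}$ and $x$ lies in the left side. Also $i\notin B\cap B'$, so $x$ lies in the right side.

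\emph{Case $i\le j$.} Then $x\in C_j$. So $x$ lies in the left side if and only if $i\notin B$ or $i\notin B'$, that is, if and only if $i\notin B\cap B'$. This is exactly membership in the right side.

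There is no real obstacle here. The only point needing care is the hypothesis $B'\subseteq[j]$, rather than merely $B'\subseteq[j']$. It guarantees that the blocks $Y_i$ with $j<i\le j'$ are never removed from the second set, so they appear on both sides. Without it the identity would fail, since such a block would be missing from the left side but present on the right.
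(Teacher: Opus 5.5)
Your proof is correct and is essentially the paper's argument: both reduce to an elementwise check for $x\in C_{j'}$. The only difference is cosmetic: you split according to whether the block containing $x$ has index greater than $j$ or at most $j$, while the paper splits according to whether $x\in Y_{B'}$, using $B'\subseteq[j]$ in the second case to get $x\in C_j$. Your closing remark on why the hypothesis $B'\subseteq[j]$ is needed, rather than only $B'\subseteq[j']$, is also accurate.
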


\begin{proof}
Both sides are contained in $C_{j'}$. Let $x\in C_{j'}$. If $x\notin
Y_{B'}$, then $x$ lies in $C_{j'}\setminus Y_{B'}$ and also outside
$Y_{B\cap B'}$. Otherwise $x\in Y_b$ for some $b\in B'$; then $b\le j$, so
$x\in C_j$, and $x$ lies in the left side if and only if $b\notin B$, that
is, if and only if $x\notin Y_{B\cap B'}$.
\end{proof}

\begin{figure}[t]
\centering
\begin{tikzpicture}[x=0.62cm, y=0.62cm]
  \newcommand{\blockrow}[3]{
    \foreach \k in {1,...,9} {
      \pgfmathtruncatemacro{\v}{#2[\k-1]}
      \ifnum\v=1
        \draw[teal!50!black, fill=teal!35, line width=0.4pt] (\k-1,#1) rectangle ++(0.8,0.8);
      \else
        \draw[gray!50, line width=0.4pt] (\k-1,#1) rectangle ++(0.8,0.8);
      \fi
    }
    \node[left, font=\small] at (-0.3,#1+0.4) {#3};
  }
  \blockrow{4}{{1,0,1,0,1,1,0,0,0}}{$C_6\setminus Y_{\{2,4\}}$}
  \blockrow{2.6}{{1,1,0,0,1,1,1,1,0}}{$\cup\;\;C_8\setminus Y_{\{3,4\}}$}
  \draw[line width=0.5pt] (-0.1,2.25) -- (8.9,2.25);
  \blockrow{1.2}{{1,1,1,0,1,1,1,1,0}}{$=\;\;C_8\setminus Y_{\{4\}}$}
  \foreach \k in {1,...,9} { \node[font=\scriptsize] at (\k-0.6,0.75) {$\k$}; }
  \draw[dashed, line width=0.5pt] (4.9,0.5) -- (4.9,5.1);
  \node[font=\footnotesize, above] at (4.9,5.1) {$s=5$};
  \node[font=\footnotesize, align=left, right] at (9.3,1.6)
    {$\{2,4\}\cap\{3,4\}=\{4\}$};
\end{tikzpicture}
\caption{The union rule with $s=5$: $C_6\setminus Y_{\{2,4\}}$ and
$C_8\setminus Y_{\{3,4\}}$ have union $C_8\setminus Y_{\{4\}}$. Filled
squares are blocks present in the set.}
\label{fig:union}
\end{figure}

\begin{proposition}[Gluing]
\label{prop:glue}
Let $1\le s<m$, and let $\cS$ be a set of pairs $(j,A)$ with $s<j\le m$
and $A\subseteq[s]$ such that the cube $Q(j,A)$ is monochromatic for every
$(j,A)\in\cS$. Then there is a monochromatic union-closed family with at
least $|\cS|/k$ members.
\end{proposition}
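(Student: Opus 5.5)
The plan is to pigeonhole on colours and then apply the union rule. Each cube $Q(j,A)$ with $(j,A)\in\cS$ is monochromatic, so it has a colour $c(j,A)\in[k]$. Some colour $c$ is taken by at least $|\cS|/k$ of the pairs; let $\cS_c$ be the set of these pairs. Put
\[
\cF=\bigcup_{(j,A)\in\cS_c}Q(j,A).
\]
Every member of $\cF$ has colour $c$, so $\cF$ is monochromatic. It remains to check two things: that $\cF$ is union-closed, and that $|\cF|\ge|\cS_c|$.

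For union-closure, take $C_j\setminus Y_B\in Q(j,A)$ and $C_{j'}\setminus Y_{B'}\in Q(j',A')$ with $j\le j'$. Here $B\subseteq A$ and $B'\subseteq A'$, and $A,A'\subseteq[s]\subseteq[j]$ because $j>s$. So the union rule, Lemma~\ref{lem:union}, applies and the union equals $C_{j'}\setminus Y_{B\cap B'}$. Since $B\cap B'\subseteq A'$, this set lies in $Q(j',A')\subseteq\cF$.

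For the size bound, I will give an injection from $\cS_c$ into $\cF$. The natural candidate is the base
\[
(j,A)\longmapsto C_j\setminus Y_A .
\]
I expect this injectivity check to be the only point needing care, because distinct pairs can share members of their cubes. For instance, tops coincide whenever $j$ is the same. Bases, however, determine the pair. The blocks are nonempty and pairwise disjoint, so a set of the form $C_j\setminus Y_A$ determines which blocks it contains. Because $A\subseteq[s]$ and $j>s$, the block $Y_j$ is present. Hence $j$ is the largest index of a block contained in the set, and $A$ is the set of indices $a\le j$ whose block is missing.

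Therefore distinct pairs give distinct bases, and
\[
|\cF|\ \ge\ |\cS_c|\ \ge\ |\cS|/k .
\]
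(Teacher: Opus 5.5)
Your proposal is correct and follows essentially the same route as the paper's proof: you pigeonhole to a colour class $\cS_c$, take the union of its cubes, obtain union-closure from Lemma~\ref{lem:union} using $A,A'\subseteq[s]\subseteq[j]$, and bound the size by the injection $(j,A)\mapsto C_j\setminus Y_A$. Your injectivity check is also the paper's: the pair $(j,A)$ is recovered from the base because $j\notin A$ and the blocks are disjoint and nonempty.
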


\begin{proof}
Choose a colour $c$ such that the set $\cS_c$ of pairs in $\cS$ whose cube
has colour $c$ satisfies $|\cS_c|\ge|\cS|/k$, and let $\cF$ be the union
of the cubes $Q(j,A)$ over $(j,A)\in\cS_c$. Every member of $\cF$ has
colour $c$.

Let $C_j\setminus Y_B\in Q(j,A)$ and $C_{j'}\setminus Y_{B'}\in Q(j',A')$
be members of $\cF$, where $(j,A),(j',A')\in\cS_c$ and, say, $j\le j'$.
As $B,B'\subseteq[s]\subseteq[j]$, Lemma~\ref{lem:union} shows that
their union is $C_{j'}\setminus Y_{B\cap B'}$, and since $B\cap
B'\subseteq A'$ this set belongs to $Q(j',A')\subseteq\cF$. So $\cF$ is
union-closed.

Finally, the map $(j,A)\mapsto C_j\setminus Y_A$ from $\cS_c$ to $\cF$ is
injective. Indeed, $j>s$ means $j\notin A$, so $C_j\setminus Y_A$ meets
$Y_j$ and no block of larger index, which determines $j$; and then $A$ is
the set of indices $i\le s$ for which $Y_i$ is disjoint from
$C_j\setminus Y_A$. Hence $|\cF|\ge|\cS_c|\ge|\cS|/k$.
\end{proof}

The family $\cF$ is not closed under intersection in general: the
intersection of the two sets above is $C_j\setminus Y_{B\cup B'}$, and
$B\cup B'$ need not be contained in $A$. This is as it must be, in view of
Theorem~\ref{thm:f}.

\section{A box inequality and supersaturation}
\label{sec:cube}

The proof of Theorem~\ref{thm:lower} needs, for every $r$, a shape whose
base size lies a little above $r$ and which is monochromatic with
probability bounded below by a quantity depending on $k$ and $t$ alone.
This section obtains such a shape from an inequality of Cauchy--Schwarz
type for products taken over the vertices of a box.

Let $G$ be a finite set and $t\ge0$. For $x,x'\in G^{t}$ and
$\omega\in\{0,1\}^{t}$ let $x^{(\omega)}\in G^{t}$ be the point whose
$i$-th coordinate is $x'_i$ if $\omega_i=1$ and $x_i$ if $\omega_i=0$. The
$2^{t}$ points $x^{(\omega)}$ are the vertices of the \emph{box} spanned by
$x$ and $x'$; they are distinct precisely when $x_i\ne x'_i$ for every
$i$. Expectations are taken with respect to the uniform distribution on
the finite set indicated by the variables.

\begin{lemma}[Box inequality]
\label{lem:box}
Let $f:G^{t}\to[0,\infty)$. If $x$ and $x'$ are independent and uniformly
distributed in $G^{t}$, then
\[
\Ex\prod_{\omega\in\{0,1\}^{t}}f\bigl(x^{(\omega)}\bigr)\ \ge\
\bigl(\Ex f(x)\bigr)^{2^{t}} .
\]
\end{lemma}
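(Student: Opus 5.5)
We argue by induction on $t$, peeling off one coordinate at a time with the Cauchy--Schwarz inequality, in the usual manner of Gowers box norms. For $t=0$ the box has one vertex and both sides equal $f(\emptyset)$ (the only point of $G^{0}$), so there is nothing to prove. For $t=1$ the left side is $\Ex_{x,x'}f(x)f(x')=(\Ex f)^{2}$, so equality holds.

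For the inductive step write a point of $G^{t}$ as $(y,z)$ with $y\in G^{t-1}$ and $z\in G$. The vertices of the box spanned by $(y,z)$ and $(y',z')$ are $(y^{(\eta)},z)$ and $(y^{(\eta)},z')$ for $\eta\in\{0,1\}^{t-1}$. Fix $y,y'$ and define
\[
g_{y,y'}(z)=\prod_{\eta\in\{0,1\}^{t-1}}f\bigl(y^{(\eta)},z\bigr)\ \ge0 .
\]
Since $z,z'$ are independent and independent of $(y,y')$, the left side equals
\[
\Ex_{y,y'}\Ex_{z,z'}\,g_{y,y'}(z)\,g_{y,y'}(z')=\Ex_{y,y'}\bigl(\Ex_z g_{y,y'}(z)\bigr)^{2}\ \ge\ \Bigl(\Ex_{y,y'}\Ex_z g_{y,y'}(z)\Bigr)^{2},
\]
where the inequality is Cauchy--Schwarz (or Jensen for the square).

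It remains to bound $\Ex_z\Ex_{y,y'}g_{y,y'}(z)$ from below. For each fixed $z$, the quantity $\Ex_{y,y'}g_{y,y'}(z)$ is exactly the left side of the lemma in dimension $t-1$ for the nonnegative function $h_z=f(\cdot,z)$ on $G^{t-1}$. The induction hypothesis therefore gives
\[
\Ex_{y,y'}g_{y,y'}(z)\ \ge\ \bigl(\Ex_y f(y,z)\bigr)^{2^{t-1}} .
\]
Averaging over $z$ and applying Jensen (the power map $u\mapsto u^{2^{t-1}}$ is convex on $[0,\infty)$), we get
\[
\Ex_z\bigl(\Ex_y f(y,z)\bigr)^{2^{t-1}}\ \ge\ \bigl(\Ex f\bigr)^{2^{t-1}} .
\]
Squaring this bound and combining it with the display above yields $(\Ex f)^{2^{t}}$, which closes the induction.

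The only point needing care is the order of operations. Cauchy--Schwarz is applied over the last coordinate with $(y,y')$ held fixed, and the induction hypothesis is applied with $z$ held fixed. The nonnegativity of $f$ is essential, both so that the induction hypothesis applies to $h_z$ and so that Jensen can be used on $[0,\infty)$. No step is a real obstacle; this is the standard argument.
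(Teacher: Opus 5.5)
Your proof is correct and is essentially the paper's argument: Cauchy--Schwarz over one split-off coordinate with the others fixed, then the induction hypothesis for the fixed coordinate, then Jensen for $u\mapsto u^{2^{t-1}}$. The only differences are cosmetic. The paper splits off the first coordinate rather than the last, and it does not separately check the redundant case $t=1$.
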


\begin{proof}
We use induction on $t$. For $t=0$ both sides equal the value of $f$ at
the only point of $G^{0}$. Let $t\ge1$ and write the points of $G^{t}$ as
$(a,y)$ with $a\in G$ and $y\in G^{t-1}$. For $y,y'\in G^{t-1}$ and $a\in
G$ put
\[
h_{y,y'}(a)=\prod_{\omega\in\{0,1\}^{t-1}}f\bigl(a,y^{(\omega)}\bigr).
\]
Splitting $\omega\in\{0,1\}^{t}$ according to its first coordinate shows
that the product over the box spanned by $x=(a,y)$ and $x'=(a',y')$ equals
$h_{y,y'}(a)\,h_{y,y'}(a')$. The variables $a,a',y,y'$ are independent,
so the Cauchy--Schwarz inequality gives
\[
\Ex\prod_{\omega}f\bigl(x^{(\omega)}\bigr)
=\Ex_{y,y'}\bigl(\Ex_a h_{y,y'}(a)\bigr)^{2}
\ \ge\ \bigl(\Ex_{a}\Ex_{y,y'}h_{y,y'}(a)\bigr)^{2}.
\]
For fixed $a$, the induction hypothesis applied to $f(a,\cdot)$ gives
$\Ex_{y,y'}h_{y,y'}(a)\ge(\Ex_yf(a,y))^{2^{t-1}}$, and since $z\mapsto
z^{2^{t-1}}$ is convex on $[0,\infty)$, Jensen's inequality gives
\[
\Ex_a\bigl(\Ex_yf(a,y)\bigr)^{2^{t-1}}\ \ge\
\bigl(\Ex_a\Ex_yf(a,y)\bigr)^{2^{t-1}}=\bigl(\Ex f\bigr)^{2^{t-1}}.
\]
Squaring, the left-hand side of the lemma is at least
$(\Ex f)^{2^{t}}$.
\end{proof}

The quantity on the left of Lemma~\ref{lem:box} is the $2^{t}$-th power
of the box norm of $f$, and the argument is the one Gowers used for the
uniformity norms in a proof of Szemer\'edi's theorem
\cite{Go01}.\footnote{In that setting one works with a function on an
abelian group and the vertices of a parallelepiped $x+\omega\cdot h$, and
repeated use of the Cauchy--Schwarz inequality compares the average over
parallelepipeds with the average of the function. Here the underlying
set is a product and the box is a combinatorial one, which is the
simplest case of the method.} Applied to the indicator of each colour
class it says that a random box is monochromatic with probability at
least $k^{1-2^{t}}$, whatever the colouring. What remains is to turn boxes
in a grid into cubes of sets, and this is where the order of the ground
set is randomised.

For the rest of the paper put
\begin{equation}
\label{eq:Mt}
L_k(t)=t\,k^{2^{t}}\qquad\text{and}\qquad M_k(t)=t\,L_k(t)=t^{2}\,k^{2^{t}}
\qquad(t\ge1),
\end{equation}
so that $M_2(1)=4$, $M_2(2)=64$ and $M_2(3)=2304$. Note that
$M_k(t)\ge t+1$.

\begin{proposition}[Supersaturation]
\label{prop:saturation}
Let $t\ge1$ and $L\ge1$ be integers, put $M=tL$, and let $r\ge0$ be an
integer with $r+M\le n$. There are integers $u$ and $a_1,\dots,a_t\ge1$
with
\[
r\le u,\qquad (u-r)+a_1+\dots+a_t\le M,\qquad
\rho_\chi(u;a_1,\dots,a_t)\ \ge\ k^{1-2^{t}}-\frac{t}{L+1}.
\]
In particular, with $L=L_k(t)$ and $M=M_k(t)$ as in \eqref{eq:Mt}, one
can achieve $\rho_\chi(u;a_1,\dots,a_t)\ge k^{-2^{t}}$, and then $u\le r+M-t$ and
$a_i\le M$ for every $i$.
\end{proposition}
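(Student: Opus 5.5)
We follow the hint in Figure~\ref{fig:proofmap}: run along $t$ disjoint chains of $L$ points each, so that a grid point of $G^{t}$ becomes a set, and apply Lemma~\ref{lem:box} to the colour classes. Concretely, fix pairwise disjoint sets $R,Z_1,\dots,Z_t\subseteq[n]$ with $|R|=r$ and $|Z_i|=L$. This is possible since $r+tL=r+M\le n$. Order each $Z_i$ as $z_{i,1},\dots,z_{i,L}$. Let $G=\{0,1,\dots,L\}$, and for $x\in G^{t}$ put
\[
S(x)=R\cup\bigcup_{i=1}^{t}\{z_{i,1},\dots,z_{i,x_i}\}.
\]
For a permutation $\sigma$ define the colouring $\chi_\sigma(x)=\chi(\sigma(S(x)))$ of $G^{t}$.

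For each colour $c$ apply Lemma~\ref{lem:box} to the indicator of $\chi_\sigma^{-1}(c)$ and sum over $c$. Power-mean (Hölder) with $\sum_c\Pr[\chi_\sigma=c]=1$ gives
\[
\Pr_{x,x'}\bigl[\text{box spanned by }x,x'\text{ is }\chi_\sigma\text{-monochromatic}\bigr]\ \ge\ k\cdot k^{-2^{t}}=k^{1-2^{t}} .
\]
Now also average over a uniform $\sigma\in S_n$; the bound persists.

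The key observation concerns a box with $x_i\ne x'_i$ for all $i$. Put $p_i=\min(x_i,x'_i)$ and $q_i=\max(x_i,x'_i)$. The vertex sets $S(x^{(\omega)})$ are then exactly the $t$-cube with base $U=S(p)$ and generators $X_i=\{z_{i,p_i+1},\dots,z_{i,q_i}\}$. Its shape is $(r+\sum p_i;\,q_1-p_1,\dots,q_t-p_t)$, and it satisfies $u\ge r$ and $(u-r)+\sum a_i=\sum q_i\le tL=M$. The box is $\chi_\sigma$-monochromatic exactly when $\sigma(Q)$ is $\chi$-monochromatic. So for fixed nondegenerate $(x,x')$, the probability over $\sigma$ equals $\rho_\chi$ of that shape, by Lemma~\ref{lem:shape} and the definition of $\rho_\chi$.

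Degenerate boxes occur with probability at most $\sum_i\Pr[x_i=x'_i]=t/(L+1)$. Hence
\[
\Ex_{x,x'}\bigl[\rho_\chi(\text{shape}(x,x'))\cdot\mathbf 1_{\text{nondeg}}\bigr]\ \ge\ k^{1-2^{t}}-\frac{t}{L+1}.
\]
Since $\mathbf 1_{\text{nondeg}}\le1$ and this is an average, some nondegenerate pair attains at least the average. Its shape is the required $(u;a)$. There is no real obstacle here, but the step needing care is the identification of box vertices with cube members. We must check that $x^{(\omega)}$ maps to $U\cup X_I$ with $I=\{i:\omega\text{ selects }q_i\}$; the sign convention on $\omega$ is irrelevant since the box is symmetric.

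For the specialisation take $L=tk^{2^{t}}$. Then
\[
\frac{t}{L+1}<\frac{t}{L}=k^{-2^{t}},
\]
so the bound is at least $k\cdot k^{-2^{t}}-k^{-2^{t}}\ge k^{-2^{t}}$ because $k\ge2$. Finally, each $a_i\ge1$, so $u-r\le M-\sum a_i\le M-t$, and each $a_i\le M$ trivially.
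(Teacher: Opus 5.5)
Your proposal is correct and follows the paper's proof essentially step for step: the same $t$ chains $Z_i$ and sets $S(x)$ for $x\in G^{t}$ with $G=\{0,1,\dots,L\}$, the box inequality applied to each colour class together with convexity to get $k^{1-2^{t}}$, the bound $t/(L+1)$ on degenerate boxes, the identification of a nondegenerate box with the cube of base $S(\min(x,x'))$ and generators $\{z_{i,q}:\min(x_i,x'_i)<q\le\max(x_i,x'_i)\}$, and averaging to pick a good shape. The specialisation via $t/(L+1)<t/L=k^{-2^{t}}$, giving $(k-1)k^{-2^{t}}\ge k^{-2^{t}}$, is also exactly as in the paper.
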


\begin{proof}
Choose pairwise disjoint sets $R,Z_1,\dots,Z_t\subseteq[n]$ with $|R|=r$
and $|Z_i|=L$, which is possible because $r+tL\le n$, and list
$Z_i=\{z_{i,1},\dots,z_{i,L}\}$. Put $G=\{0,1,\dots,L\}$ and, for $x\in
G^{t}$,
\[
S(x)=R\cup\bigcup_{i=1}^{t}\{z_{i,q}:\ 1\le q\le x_i\},
\]
so that $x$ records how far along each of the $t$ chains
$z_{i,1},z_{i,2},\dots$ the set $S(x)$ reaches. Let $\sigma\in S_n$ and
$x,x'\in G^{t}$ be independent and uniform, and let $E$ be the event
that $\chi(\sigma(S(\cdot)))$ is constant on the box spanned by $x$ and
$x'$.

Fix $\sigma$, and for each colour $c$ let $p_c$ be the probability that
$\chi(\sigma(S(x)))=c$ for uniform $x$. Lemma~\ref{lem:box} applied to the
indicator function of the colour class of $c$ shows that, conditionally
on $\sigma$, all vertices of the box have colour $c$ with probability at
least $p_c^{2^{t}}$. Since $p_1+\dots+p_k=1$, convexity gives
$\sum_cp_c^{2^{t}}\ge k\cdot k^{-2^{t}}$, and averaging over $\sigma$ gives
$\Pr(E)\ge k^{1-2^{t}}$. The event $D$ that $x_i=x'_i$ for some $i$ has
probability at most $t/(L+1)$, so
\[
\Pr(E\setminus D)\ \ge\ k^{1-2^{t}}-\frac{t}{L+1}.
\]

Suppose now that $x_i\ne x'_i$ for every $i$, and put
$\ell_i=\min(x_i,x'_i)$ and $h_i=\max(x_i,x'_i)$. The vertices of the box
are the points whose $i$-th coordinate is $\ell_i$ or $h_i$ for every
$i$, so the sets $S(x^{(\omega)})$ form the $t$-cube
$Q_{x,x'}=Q(U;X_1,\dots,X_t)$ with
\[
U=S(\ell),\qquad X_i=\{z_{i,q}:\ \ell_i<q\le h_i\}
\]
(Figure~\ref{fig:box}). Its shape is
$(r+\sum_i\ell_i;\ h_1-\ell_1,\dots,h_t-\ell_t)$, and $u=r+\sum_i\ell_i$
and $a_i=h_i-\ell_i$ satisfy $u\ge r$, $a_i\ge1$ and
$(u-r)+\sum_ia_i=\sum_ih_i\le tL=M$. The event $E\setminus D$ says that
$\sigma(Q_{x,x'})$ is monochromatic. For a fixed pair $(x,x')$ outside
$D$, the probability of this over $\sigma$ is $\rho_\chi$ at the shape of
$Q_{x,x'}$, because $\sigma(Q_{x,x'})$ is then a uniformly random cube of
that shape. Hence $\Pr(E\setminus D)$ is an average, over all pairs
$(x,x')$, of numbers that vanish on $D$ and are values of $\rho_\chi$
at admissible shapes elsewhere, and one of these values is at least
$\Pr(E\setminus D)$.

For $L=L_k(t)$ we have $t/(L+1)<t/L=k^{-2^{t}}$, so the value found is at
least $(k-1)k^{-2^{t}}\ge k^{-2^{t}}$. The bounds on $u$ and $a_i$ follow
from $a_i\ge1$.
\end{proof}

\begin{figure}[t]
\centering
\begin{tikzpicture}[x={(1.2cm,0cm)}, y={(0.66cm,0.44cm)}, z={(0cm,1.1cm)},
    dot/.style={circle, fill=black!22, inner sep=0pt, minimum size=2.3pt},
    vtx/.style={circle, draw=teal!40!black, fill=teal!45, inner sep=0pt,
      minimum size=7pt},
    tip/.style={circle, draw=red!45!black, fill=red!70!black, inner sep=0pt,
      minimum size=8pt},
    lab/.style={font=\small, fill=white, fill opacity=0.85, text opacity=1,
      inner sep=1.2pt, rounded corners=1pt}]
  \foreach \a in {0,...,4}
    \foreach \b in {0,...,4}
      \foreach \c in {0,...,4}
        \node[dot] at (\a,\b,\c) {};
  \draw[black!18, line width=0.4pt] (4,0,0) -- (4,4,0) -- (0,4,0);
  \draw[black!18, line width=0.4pt] (0,0,4) -- (4,0,4) -- (4,4,4) -- (0,4,4) -- cycle;
  \draw[black!18, line width=0.4pt] (4,0,0) -- (4,0,4) (4,4,0) -- (4,4,4) (0,4,0) -- (0,4,4);
  \draw[-{Stealth[length=5pt]}, black!65, line width=0.6pt] (0,0,0) -- (4,0,0) -- (4.7,0,0);
  \node[below right=0pt and 1pt, font=\small] at (4.7,0,0) {$x_1$};
  \draw[-{Stealth[length=5pt]}, black!65, line width=0.6pt] (4,0,0) -- (4,4.8,0);
  \node[right=2pt, font=\small] at (4,4.8,0) {$x_2$};
  \draw[-{Stealth[length=5pt]}, black!65, line width=0.6pt] (0,0,0) -- (0,0,4.75);
  \node[above, font=\small] at (0,0,4.75) {$x_3$};
  \foreach \a in {0,...,4}
    \node[below=2pt, font=\scriptsize, text=black!75] at (\a,0,0) {$\a$};
  \foreach \c in {1,...,4}
    \node[left=3pt, font=\scriptsize, text=black!75] at (0,0,\c) {$\c$};
  \foreach \b in {1,...,4}
    \node[right=3pt, font=\scriptsize, text=black!75] at (4,\b,0) {$\b$};
  \draw[teal!50!black, dashed, line width=0.6pt] (1,3,0) -- (3,3,0)
    (1,3,0) -- (1,1,0) (1,3,0) -- (1,3,2);
  \fill[teal!30, opacity=0.35] (1,1,0) -- (3,1,0) -- (3,1,2) -- (1,1,2) -- cycle;
  \fill[teal!45, opacity=0.35] (1,1,2) -- (3,1,2) -- (3,3,2) -- (1,3,2) -- cycle;
  \fill[teal!60, opacity=0.35] (3,1,0) -- (3,3,0) -- (3,3,2) -- (3,1,2) -- cycle;
  \draw[teal!40!black, line width=1.1pt]
    (1,1,0) -- (3,1,0) -- (3,3,0) -- (3,3,2) -- (1,3,2) -- (1,1,2) -- cycle
    (1,1,2) -- (3,1,2) -- (3,3,2) (3,1,0) -- (3,1,2);
  \foreach \a in {1,3}
    \foreach \b in {1,3}
      \foreach \c in {0,2}
        \node[vtx] at (\a,\b,\c) {};
  \node[tip] (x) at (1,1,2) {};
  \node[tip] (xp) at (3,3,0) {};
  \node[lab, above left=1pt and 2pt] at (x) {$x$};
  \node[lab, below right=1pt and 2pt] at (xp) {$x'$};
  \node[lab, below right=1pt and 2pt] at (1,1,0) {$\ell$};
  \node[lab, above right=1pt and 2pt] at (3,3,2) {$h$};
\end{tikzpicture}
\caption{The grid $G^{3}$ with $G=\{0,1,\dots,4\}$, as in the proof of
Proposition~\ref{prop:saturation} with $t=3$ and $L=4$, and the box
spanned by $x=(1,1,2)$ and $x'=(3,3,0)$. Its eight vertices are the
points of $\{1,3\}\times\{1,3\}\times\{0,2\}$, between the corners
$\ell=(1,1,0)$ and $h=(3,3,2)$. The sets $S(y)$ at these vertices form
the $3$-cube with base $S(\ell)$ and generators
$X_i=\{z_{i,q}:\ell_i<q\le h_i\}$, whose shape is $(r+2;2,2,2)$.}
\label{fig:box}
\end{figure}

The proof is short enough that its two sources of loss are easy to
see.\footnote{The random ordering $\sigma$ is what turns a single good
shape into a positive proportion of all cubes of that shape. Without it
the box inequality would still produce a monochromatic cube in every
window, but it would say nothing about how many cubes of the same shape
are monochromatic, and it is this density that the averaging of
Section~\ref{sec:lower} consumes.} The factor $k^{1-2^{t}}$ is already the
probability that a fixed $t$-cube is monochromatic in a uniformly random
$k$-colouring, so for such colourings the expected value of $\rho_\chi$
at every shape equals $k^{1-2^{t}}$ and nothing better can be obtained on
average. The subtraction of $t/(L+1)$ only accounts for degenerate
boxes, and it is what forces the window $M$ to be as large as
$t^{2}k^{2^{t}}$.

\begin{remark}
\label{rem:hj}
Identify a subset $S$ of a finite set $Z$ with the word in $\{0,1\}^{Z}$ that has
the letter $1$ exactly at the positions in $S$. A $t$-cube then becomes a
$t$-dimensional combinatorial subspace: the positions in $U$ carry the
fixed letter $1$, those outside $T$ the fixed letter $0$, and those in
$X_i$ the $i$-th variable. Since $\rho_\chi>0$ in
Proposition~\ref{prop:saturation}, every $k$-colouring of the subsets of
a set of size $M_k(t)$ is constant on some $t$-cube, which is the
two-letter case of the multidimensional Hales--Jewett theorem
\cite{HJ63,GRS90} with the bound $t^{2}k^{2^{t}}$. The proposition says
more: a proportion at least $k^{-2^{t}}$ of the cubes of one shape are
monochromatic, and this density is what makes the rate in
Theorem~\ref{thm:lower} explicit.
\end{remark}

\section{Proof of Theorem~\ref{thm:lower}}
\label{sec:lower}

Fix $k\ge2$ and $t\ge1$, put $M=M_k(t)$ as in \eqref{eq:Mt}, and let $n$
be large enough that
\[
m=\Bigl\lfloor\frac{n-1}{M}\Bigr\rfloor\qquad\text{and}\qquad
s=\Bigl\lfloor\frac m2\Bigr\rfloor
\]
satisfy $s\ge t$. Note that $mM\le n-1$. Fix pairwise disjoint sets
$G_1,\dots,G_m\subseteq[n]$ of size $M$, the \emph{slots}, and list each
slot as $G_i=\{g_{i,1},\dots,g_{i,M}\}$; for instance
$g_{i,q}=(i-1)M+q$. For a vector $\beta\in[M]^{m}$ of block sizes let
\[
Y_i^{\beta}=\{g_{i,1},\dots,g_{i,\beta_i}\}\qquad(1\le i\le m),
\]
and for $\sigma\in S_n$ let $Q_{\sigma,\beta}(j,A)$ denote the cube
$Q(j,A)$ of Definition~\ref{def:chain} built from the blocks
$\sigma(Y_1^{\beta}),\dots,\sigma(Y_m^{\beta})$. Thus
$Q_{\sigma,\beta}(j,A)=\sigma\bigl(Q_{\mathrm{id},\beta}(j,A)\bigr)$, and
$Q_{\sigma,\beta}(j,A)$ has shape
\begin{equation}
\label{eq:chainshape}
\Bigl(\sum_{i\le j,\ i\notin A}\beta_i;\ (\beta_a)_{a\in A}\Bigr).
\end{equation}
Let $\cP$ be the set of pairs $(j,A)$ with $s<j\le m$, $A\subseteq[s]$
and $|A|=t$. Call $\beta$ \emph{good for} $(j,A)$ if the shape
\eqref{eq:chainshape} satisfies $\rho_\chi\ge k^{-2^{t}}$.

\begin{lemma}[Slices]
\label{lem:slices}
Let $1\le j\le m$ and $A\subseteq[j-1]$ with $|A|=t$. At least
$M^{m-t-1}$ vectors $\beta\in[M]^{m}$ are good for $(j,A)$.
\end{lemma}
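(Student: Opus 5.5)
The plan is to fix all coordinates of $\beta$ outside $A\cup\{j\}$ and then use Proposition~\ref{prop:saturation} to choose the remaining $t+1$ coordinates. Write $A=\{a_1<\dots<a_t\}$ and let $J=A\cup\{j\}$, a set of $t+1$ indices, since $j\notin A$. There are $M^{m-t-1}$ ways to choose the coordinates $(\beta_i)_{i\notin J}$ in $[M]$. It therefore suffices to show that for each such choice there is at least one completion $(\beta_i)_{i\in J}\in[M]^{t+1}$ that is good for $(j,A)$. Distinct choices of the outside coordinates give distinct vectors $\beta$, so this yields at least $M^{m-t-1}$ good vectors.

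Fix the outside coordinates and set
\[
r=\sum_{i<j,\ i\notin A}\beta_i .
\]
By \eqref{eq:chainshape}, the shape of $Q(j,A)$ is $(r+\beta_j;\beta_{a_1},\dots,\beta_{a_t})$. The base size $u=r+\beta_j$ is adjusted through $\beta_j$ alone, and the generator sizes are the $\beta_a$ for $a\in A$. None of these coordinates enters $r$, because $r$ only involves indices $i<j$ with $i\notin A$.

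To apply Proposition~\ref{prop:saturation}, take $L=L_k(t)$ and $M=M_k(t)$, and replace $r$ by $r+1$ so that $\beta_j\ge1$ is forced. This requires $(r+1)+M\le n$. At most $j-1\le m-1$ outside indices contribute to $r$, each at most $M$, so
\[
r+1+M\le (m-1)M+1+M=mM+1\le n,
\]
using $mM\le n-1$. The proposition then gives $u\ge r+1$ and $a_1,\dots,a_t\ge1$ with $(u-r-1)+\sum_i a_i\le M$ and $\rho_\chi(u;a)\ge k^{-2^t}$. Set $\beta_j=u-r$ and $\beta_{a_i}=a_i$. Then $\beta_j\ge1$, and
\[
\beta_j\le 1+M-\sum_i a_i\le M-t+1\le M,
\]
while $\beta_{a_i}=a_i\le M$. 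The resulting shape equals $(u;a)$, so this $\beta$ is good for $(j,A)$.

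The only real care needed is the bookkeeping at the boundary. We shift $r$ by one so that $\beta_j\ge1$, as blocks must be nonempty, and we check that the hypothesis $r+1+M\le n$ follows from $mM\le n-1$. The generators must be listed in increasing order of $a$, which matches the order in which the proposition's $a_i$ are assigned to $a_1<\dots<a_t$. Since $\rho_\chi$ is defined for ordered shapes, this is consistent.
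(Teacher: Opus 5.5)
Your proof is correct and is essentially the paper's argument. You fix the $m-t-1$ coordinates outside $A\cup\{j\}$, apply Proposition~\ref{prop:saturation} at base $r+1$ (your $r$ is the paper's $R$), and set $\beta_j=u-r$ and the generator coordinates to the sizes $a_i$. One small notational point: you use $a_i$ both for the elements of $A$ and for the generator sizes returned by the proposition, so ``$\beta_{a_i}=a_i$'' should use distinct symbols, as the paper does with $a_{(i)}$.
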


\begin{proof}
Write $A=\{a_{(1)}<\dots<a_{(t)}\}$ and let $K=[m]\setminus(A\cup\{j\})$,
a set of $m-t-1$ indices. We show that every choice of $(\beta_i)_{i\in
K}\in[M]^{K}$ extends to a vector $\beta$ that is good for $(j,A)$.
Different choices give different vectors, which proves the lemma.

Fix $(\beta_i)_{i\in K}$, let $R=\sum_{i<j,\,i\notin A}\beta_i$, which
involves only indices in $K$, and put $r=R+1$. Since $R\le(j-1-t)M\le
(m-2)M$, we have $r+M\le(m-1)M+1\le n$. Proposition~\ref{prop:saturation}
gives $u$ and $a_1,\dots,a_t$ with $\rho_\chi(u;a)\ge k^{-2^{t}}$, where
$z=u-r$ satisfies $0\le z\le M-t\le M-1$ and $1\le a_i\le M$. Set
\[
\beta_j=z+1,\qquad \beta_{a_{(i)}}=a_i\quad(1\le i\le t).
\]
These values lie in $[M]$, and the shape \eqref{eq:chainshape} becomes
$(R+z+1;a_1,\dots,a_t)=(u;a_1,\dots,a_t)$. So $\beta$ is good for
$(j,A)$.
\end{proof}

\begin{lemma}[Averaging]
\label{lem:average}
There are $\sigma\in S_n$ and $\beta\in[M]^{m}$ such that at least
\[
\frac{|\cP|}{k^{2^{t}}M^{t+1}}
\]
of the cubes $Q_{\sigma,\beta}(j,A)$, $(j,A)\in\cP$, are monochromatic.
\end{lemma}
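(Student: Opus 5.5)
The plan is a first-moment (double counting) argument over the uniform random pair $(\sigma,\beta)\in S_n\times[M]^{m}$. Let $N$ be the number of pairs $(j,A)\in\cP$ for which $Q_{\sigma,\beta}(j,A)$ is monochromatic. I will show $\Ex N\ge|\cP|/(k^{2^{t}}M^{t+1})$. Some $(\sigma,\beta)$ then attains at least the expectation, which is the statement of the lemma.

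By linearity, $\Ex N=\sum_{(j,A)\in\cP}\Pr\bigl(Q_{\sigma,\beta}(j,A)\text{ monochromatic}\bigr)$, and it suffices to bound each term below by $k^{-2^{t}}M^{-t-1}$. Fix $(j,A)\in\cP$; since $A\subseteq[s]$ and $j>s$, we have $A\subseteq[j-1]$, so Lemma~\ref{lem:slices} applies. Condition on $\beta$. The cube $Q_{\mathrm{id},\beta}(j,A)$ is a $t$-cube of shape \eqref{eq:chainshape}. Its generators $Y_a^{\beta}$ are nonempty because $\beta_a\ge1$, and all blocks lie in $[n]$. We also have $Q_{\sigma,\beta}(j,A)=\sigma(Q_{\mathrm{id},\beta}(j,A))$. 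By Lemma~\ref{lem:shape} and the definition of $\rho_\chi$, the conditional probability over uniform $\sigma$ that this cube is monochromatic is exactly $\rho_\chi$ evaluated at the shape \eqref{eq:chainshape}.

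The conditional probability is therefore at least $k^{-2^{t}}$ whenever $\beta$ is good for $(j,A)$, and it is nonnegative otherwise. Lemma~\ref{lem:slices} says the proportion of good $\beta$ in $[M]^{m}$ is at least $M^{m-t-1}/M^{m}=M^{-t-1}$. Averaging over $\beta$ gives
\[
\Pr\bigl(Q_{\sigma,\beta}(j,A)\text{ monochromatic}\bigr)\ge k^{-2^{t}}M^{-t-1}.
\]
Summing over $\cP$ finishes the argument.

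No step here is a real obstacle, since the substance is in Proposition~\ref{prop:saturation} and Lemma~\ref{lem:slices}. The only point needing care is the conditioning. For a fixed $\beta$, the randomness in $\sigma$ must produce a uniform cube of the given shape. This holds because $\sigma$ is independent of $\beta$ and Lemma~\ref{lem:shape} makes the count of good $\sigma$ depend only on the shape. The other check is that the shape is admissible for $\rho_\chi$, meaning its total size is at most $n$; this follows from $mM\le n-1$.
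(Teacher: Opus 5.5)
Your proposal is correct and is essentially the paper's argument. The paper counts the triples $(\sigma,\beta,(j,A))$ for which the cube is monochromatic, using at least $n!\,k^{-2^{t}}$ suitable $\sigma$ for each good $\beta$ and Lemma~\ref{lem:slices} for the number of good $\beta$, and then divides by the $n!\,M^{m}$ pairs $(\sigma,\beta)$. That is your first-moment computation written as a count.
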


\begin{proof}
Count the triples $(\sigma,\beta,(j,A))\in S_n\times[M]^m\times\cP$ for
which $Q_{\sigma,\beta}(j,A)$ is monochromatic. For fixed $(j,A)\in\cP$
and $\beta$ good for $(j,A)$, the number of suitable $\sigma$ is $n!$
times the value of $\rho_\chi$ at the shape \eqref{eq:chainshape}, which
is at least $n!\,k^{-2^{t}}$. By Lemma~\ref{lem:slices} the number of
triples is at least $|\cP|\,M^{m-t-1}\,n!\,k^{-2^{t}}$. There are
$n!\,M^{m}$ pairs $(\sigma,\beta)$, so one of them occurs in at least
$|\cP|\,k^{-2^{t}}M^{-t-1}$ triples.
\end{proof}

\begin{proof}[Proof of Theorem~\ref{thm:lower}]
Let $c_{k,t}$ be as in the statement. Since $M\ge t+1$ we have
$(4M^{2})^{t+1}\ge(4M(t+1))^{t+1}$, so for $1\le n<4M(t+1)$ the bound
$F_k(n)\ge1>c_{k,t}n^{t+1}$ is trivial. Let $n\ge4M(t+1)$.

Take $\sigma,\beta$ as in Lemma~\ref{lem:average} and apply
Proposition~\ref{prop:glue} to the blocks
$\sigma(Y_1^\beta),\dots,\sigma(Y_m^\beta)$, which are pairwise disjoint
and nonempty, and to the set $\cS\subseteq\cP$ of pairs whose cube is
monochromatic. Every $k$-colouring therefore has a monochromatic
union-closed family with at least
\begin{equation}
\label{eq:Pbound}
\frac{|\cP|}{k^{2^{t}+1}M^{t+1}}
\end{equation}
members. It remains to bound $|\cP|=(m-s)\binom st$ from below. Since
$n-1<M(m+1)$ and $m\le2s+1$, we have $n\le2M(s+1)$. Together with
$n\ge4M(t+1)$ this gives $s\ge2t+1$, so $s\ge1$ and therefore $n\le4Ms$.
Also $m-s\ge s$, and $\binom st\ge(s/t)^{t}$ because $s\ge t$. Hence
\[
|\cP|\ \ge\ \frac{s^{t+1}}{t^{t}}\ \ge\
\frac{n^{t+1}}{t^{t}(4M)^{t+1}},
\]
and \eqref{eq:Pbound} is at least $c_{k,t}\,n^{t+1}$.

For the last assertion let $k=2$, so that $\log_2M=2\log_2t+2^{t}$ and
\begin{align*}
\log_2\frac1{c_{2,t}}&=2^{t}+1+t\log_2t+(t+1)(2+2\log_2M)\\
&=(2t+3)\,2^{t}+(5t+4)\log_2t+2t+3 .
\end{align*}
Since $\log_2t\le t-1$ and $2^{t}\ge2t$, the last two terms are at most
$5t^{2}+t-1\le2t(t+1)^{2}\le(t+1)^{2}2^{t}$, so
$\log_2(1/c_{2,t})\le(t+2)^{2}2^{t}$. If $\log_2n\ge(t+2)^{2}2^{t}$, then
$c_{2,t}n\ge1$ and $F(n)\ge c_{2,t}n^{t+1}\ge n^{t}$.
\end{proof}

\section{The upper bound and the random colouring}
\label{sec:upper}

A family $A_1,\dots,A_d$ of subsets of $[n]$ is an \emph{independent
system} if each $A_i$ has a \emph{private element}, that is, some
$x_i\in A_i$ lying in no $A_{i'}$ with $i'\ne i$. A set $X\subseteq[n]$ is
\emph{shattered} by a family $\cF$ if $\{A\cap X:\ A\in\cF\}=2^{X}$, and
the \emph{Vapnik--Chervonenkis dimension} $\vc(\cF)$ is the largest size of
a shattered set.

\begin{lemma}
\label{lem:indep}
Let $d\ge0$ and $\cF\subseteq2^{[n]}$.
\begin{enumerate}[label=\textup{(\alph*)},leftmargin=2em]
\item If $A_1,\dots,A_d$ is an independent system, the $2^{d}-1$ unions
$\bigcup_{i\in I}A_i$, $\emptyset\ne I\subseteq[d]$, are pairwise
distinct.
\item If $\cF$ shatters a set of size $d$, then $\cF$ contains an
independent system of $d$ sets.
\item \textup{(Sauer \cite{Sa72}, Shelah \cite{Sh72})} If $\vc(\cF)\le
d$, then $|\cF|\le\sum_{j=0}^{d}\binom nj$.
\end{enumerate}
\end{lemma}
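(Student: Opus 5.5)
For part (a), I will argue directly from private elements. Let $x_i\in A_i$ be a private element of $A_i$. For $I\subseteq[d]$, the point $x_i$ lies in $\bigcup_{i'\in I}A_{i'}$ if and only if $i\in I$: if $i\in I$ it lies in $A_i$, and if $i\notin I$ it lies in no $A_{i'}$ with $i'\in I$, since each such $i'$ differs from $i$. So the union determines $I$ as $\{i:\ x_i\in\bigcup_{i'\in I}A_{i'}\}$. Distinct nonempty $I$ therefore give distinct unions, which is (a).

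For part (b), let $X=\{x_1,\dots,x_d\}$ be shattered by $\cF$. For each $i$, shattering gives some $A_i\in\cF$ with $A_i\cap X=\{x_i\}$. Then $x_i\in A_i$, and for $i'\ne i$ we have $A_{i'}\cap X=\{x_{i'}\}$, so $x_i\notin A_{i'}$. Thus $x_i$ is private to $A_i$. The sets $A_i$ are pairwise distinct because their traces on $X$ differ, so we obtain $d$ sets forming an independent system. For $d=0$ the statement is vacuous.

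For part (c), the Sauer--Shelah lemma, I will follow the standard shifting-free induction on $n$, in the form $|\cF|\le\#\{\text{sets shattered by }\cF\}$ (Pajor's version). This implies (c) at once, since every shattered set has size at most $d$, and there are $\sum_{j\le d}\binom nj$ subsets of $[n]$ of size at most $d$. To prove Pajor's inequality, induct on $n$; the case $n=0$ is trivial. Split $\cF$ by the element $n$:
\begin{itemize}
\item $\cF_0=\{A\setminus\{n\}:\ A\in\cF\}$;
\item $\cF_1=\{B\subseteq[n-1]:\ B\in\cF\text{ and }B\cup\{n\}\in\cF\}$.
\end{itemize}
Then $|\cF|=|\cF_0|+|\cF_1|$. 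Every set shattered by $\cF_0$ is shattered by $\cF$. If $Y$ is shattered by $\cF_1$, then both $Y$ and $Y\cup\{n\}$ are shattered by $\cF$, and these two collections are disjoint, so the counts add. Applying induction to $\cF_0$ and $\cF_1$ inside $2^{[n-1]}$ finishes the argument.

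The only step needing care is this disjointness bookkeeping in (c). It is where I would double-check that each shattered set of $\cF$ is counted at most once, by distinguishing whether it contains $n$. Since the paper cites (c) as a known result, I would likely present this inductive proof briefly or simply invoke \cite{Sa72,Sh72}.
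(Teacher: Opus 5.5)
Your parts (a) and (b) are the paper's arguments. In (a) the private element $x_i$ detects whether $i\in I$, and in (b) you choose $A_i\in\cF$ with $A_i\cap X=\{x_i\}$. The difference is in (c): the paper gives no proof there. It simply cites Sauer and Shelah, and the formal development imports the lemma from Mathlib.

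Your Pajor-type induction is a correct, self-contained replacement. It also proves the stronger bound $|\cF|\le|\mathrm{sh}(\cF)|$, where $\mathrm{sh}(\cF)$ is the family of sets shattered by $\cF$. The only cost is length, which the paper avoids by citing.

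One step should be stated more precisely: the injection behind ``the counts add''. Send $\mathrm{sh}(\cF_0)$ into $\mathrm{sh}(\cF)$ by the identity; these sets avoid $n$. Send $\mathrm{sh}(\cF_1)$ into $\mathrm{sh}(\cF)$ by $Y\mapsto Y\cup\{n\}$; these sets contain $n$. Do not also count $Y$ itself for $Y\in\mathrm{sh}(\cF_1)$. Since $\cF_1\subseteq\cF_0$, you have $\mathrm{sh}(\cF_1)\subseteq\mathrm{sh}(\cF_0)$, so counting $Y$ would double count. With this reading you get $|\mathrm{sh}(\cF)|\ge|\mathrm{sh}(\cF_0)|+|\mathrm{sh}(\cF_1)|\ge|\cF_0|+|\cF_1|=|\cF|$. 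Your closing remark about distinguishing whether a shattered set contains $n$ shows you have exactly this in mind.
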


\begin{proof}
(a) The private element $x_i$ lies in $\bigcup_{i'\in I}A_{i'}$ exactly
when $i\in I$, so the union determines $I$. (b) If $X=\{x_1,\dots,x_d\}$
is shattered, choose $A_i\in\cF$ with $A_i\cap X=\{x_i\}$; then $x_i$ is a
private element of $A_i$. (c) This is the Sauer--Shelah lemma.
\end{proof}

\begin{proof}[Proof of Theorem~\ref{thm:upper}]
Let $\chi$ be a uniformly random $2$-colouring of $2^{[n]}$, the colours
of the $2^{n}$ sets being independent and uniform. Call an ordered
$d$-tuple $(A_1,\dots,A_d)$ of subsets of $[n]$ \emph{bad} if it is an
independent system and the $2^{d}-1$ sets $\bigcup_{i\in I}A_i$,
$\emptyset\ne I\subseteq[d]$, all have the same colour. By
Lemma~\ref{lem:indep}(a) these sets are distinct, so a fixed independent
tuple is bad with probability $2^{1-(2^d-1)}=2^{2-2^{d}}$. There are at
most $2^{nd}$ tuples, and $2^{nd}\cdot2^{2-2^{d}}<1$ by hypothesis, so
some colouring has no bad tuple. Fix it.

Let $\cF$ be a monochromatic union-closed family. If $\cF$ contained an
independent system of $d$ sets, all unions of its members would lie in
$\cF$ and have the same colour, and the system would be bad. So $\cF$
contains no independent system of $d$ sets, $\vc(\cF)\le d-1$ by
Lemma~\ref{lem:indep}(b), and $|\cF|\le\sum_{j<d}\binom nj$ by
Lemma~\ref{lem:indep}(c).

For the second assertion let $n\ge2$, and put $a=\lfloor\log_2n\rfloor$
and $b=\lfloor\log_2a\rfloor$, so that $a\ge1$, $n<2^{a+1}$, $a<2^{b+1}$
and $b+1\le a$. For $d=a+b+3$ we get
\[
2^{d}=2^{a+1}\cdot2\cdot2^{b+1}\ \ge\ 2(n+1)(a+1),
\]
whereas $nd+2=n(a+1)+n(b+2)+2\le2n(a+1)+2<2(n+1)(a+1)$, because
$b+2\le a+1$ and $a\ge1$. So $2^{d}>nd+2$, and the first assertion gives
$F(n)\le\sum_{j<d}\binom nj\le\sum_{j<d}n^{j}\le d\,n^{d-1}$. Finally
$a\le\log_2n$ and $b\le\log_2a\le\log_2\log_2n$, so $d\le\log_2n+
\log_2\log_2n+3$.
\end{proof}

Figure~\ref{fig:surface} shows the size of the bound of
Theorem~\ref{thm:upper} as a function of $n$ and $d$. Taking for $d$ the
least admissible value $d(n)$ gives a bound valid for every $n$, and
$d(n)=\log_2n+\log_2\log_2n+O(1)$ as $n\to\infty$.

\begin{figure}[t]
\centering
\begin{tikzpicture}
  \begin{axis}[
      width=11.5cm, height=8.6cm,
      view={-38}{24},
      xlabel={$n$}, ylabel={$d$}, zlabel={$\log_2$ of the bound},
      xmin=8, xmax=256, ymin=1, ymax=14, zmin=0, zmax=256,
      xtick={8,64,128,192,256}, ytick={2,6,10,14}, ztick={0,64,128,192,256},
      tick label style={font=\scriptsize}, label style={font=\small},
      grid=major, grid style={gray!25},
      colormap={teal}{rgb255=(230,245,243) rgb255=(0,128,128) rgb255=(10,60,70)},
      z buffer=sort, clip=false]
    \addplot3[surf, shader=flat, opacity=0.18, fill=orange!80!red, draw=orange!60!black,
      samples=2, domain=8:256, y domain=1:14] {x};
    \addplot3[surf, shader=faceted interp, opacity=0.9, draw=black!30, line width=0.1pt] table {
8 1 0.000
24 1 0.000
40 1 0.000
56 1 0.000
72 1 0.000
88 1 0.000
104 1 0.000
120 1 0.000
136 1 0.000
152 1 0.000
168 1 0.000
184 1 0.000
200 1 0.000
216 1 0.000
232 1 0.000
248 1 0.000

8 2 3.170
24 2 4.644
40 2 5.358
56 2 5.833
72 2 6.190
88 2 6.476
104 2 6.714
120 2 6.919
136 2 7.098
152 2 7.257
168 2 7.401
184 2 7.531
200 2 7.651
216 2 7.762
232 2 7.864
248 2 7.960

8 3 5.209
24 3 8.234
40 3 9.681
56 3 10.641
72 3 11.360
88 3 11.936
104 3 12.415
120 3 12.826
136 3 13.186
152 3 13.505
168 3 13.793
184 3 14.055
200 3 14.295
216 3 14.517
232 3 14.722
248 3 14.914

8 4 6.539
24 4 11.183
40 4 13.385
56 4 14.839
72 4 15.926
88 4 16.794
104 4 17.517
120 4 18.136
136 4 18.678
152 4 19.159
168 4 19.592
184 4 19.986
200 4 20.347
216 4 20.680
232 4 20.989
248 4 21.278

8 5 7.349
24 5 13.661
40 5 16.639
56 5 18.597
72 5 20.057
88 5 21.222
104 5 22.190
120 5 23.019
136 5 23.744
152 5 24.388
168 5 24.968
184 5 25.494
200 5 25.976
216 5 26.422
232 5 26.835
248 5 27.220

8 6 7.775
24 6 15.759
40 6 19.536
56 6 22.008
72 6 23.846
88 6 25.311
104 6 26.528
120 6 27.569
136 6 28.478
152 6 29.286
168 6 30.012
184 6 30.672
200 6 31.277
216 6 31.835
232 6 32.352
248 6 32.835

8 7 7.948
24 7 17.536
40 7 22.133
56 7 25.129
72 7 27.352
88 7 29.120
104 7 30.588
120 7 31.843
136 7 32.939
152 7 33.911
168 7 34.786
184 7 35.580
200 7 36.307
216 7 36.978
232 7 37.600
248 7 38.181

8 8 7.994
24 8 19.032
40 8 24.470
56 8 28.001
72 8 30.615
88 8 32.691
104 8 34.412
120 8 35.882
136 8 37.166
152 8 38.304
168 8 39.328
184 8 40.257
200 8 41.107
216 8 41.892
232 8 42.620
248 8 43.299

8 9 8.000
24 9 20.278
40 9 26.578
56 9 30.654
72 9 33.664
88 9 36.051
104 9 38.029
120 9 39.717
136 9 41.189
152 9 42.495
168 9 43.668
184 9 44.733
200 9 45.708
216 9 46.607
232 9 47.441
248 9 48.218

8 10 8.000
24 10 21.298
40 10 28.477
56 10 33.109
72 10 36.523
88 10 39.225
104 10 41.461
120 10 43.369
136 10 45.032
152 10 46.506
168 10 47.830
184 10 49.031
200 10 50.131
216 10 51.145
232 10 52.085
248 10 52.962

8 11 8.000
24 11 22.114
40 11 30.186
56 11 35.385
72 11 39.208
88 11 42.229
104 11 44.728
120 11 46.857
136 11 48.712
152 11 50.356
168 11 51.831
184 11 53.170
200 11 54.395
216 11 55.524
232 11 56.571
248 11 57.547

8 12 8.000
24 12 22.746
40 12 31.718
56 12 37.496
72 12 41.734
88 12 45.079
104 12 47.841
120 12 50.194
136 12 52.243
152 12 54.058
168 12 55.686
184 12 57.163
200 12 58.514
216 12 59.759
232 12 60.913
248 12 61.989

8 13 8.000
24 13 23.216
40 13 33.086
56 13 39.452
72 13 44.113
88 13 47.785
104 13 50.815
120 13 53.393
136 13 55.638
152 13 57.624
168 13 59.406
184 13 61.022
200 13 62.499
216 13 63.861
232 13 65.123
248 13 66.299

8 14 8.000
24 14 23.545
40 14 34.300
56 14 41.265
72 14 46.354
88 14 50.358
104 14 53.658
120 14 56.464
136 14 58.905
152 14 61.065
168 14 63.002
184 14 64.757
200 14 66.362
216 14 67.841
232 14 69.211
248 14 70.488

    };
    \addplot3[line width=1.6pt, red!75!black, mark=none] coordinates {(8,6,7.775) (12,7,11.293) (16,7,13.862) (20,8,17.074) (24,8,19.032) (28,8,20.683) (32,9,23.842) (36,9,25.289) (40,9,26.578) (44,9,27.738) (48,9,28.793) (52,9,29.761) (56,9,30.654) (60,10,34.050) (64,10,34.927) (68,10,35.749) (72,10,36.523) (76,10,37.253) (80,10,37.944) (84,10,38.600) (88,10,39.225) (92,10,39.821) (96,10,40.391) (100,10,40.937) (104,11,44.728) (108,11,45.290) (112,11,45.831) (116,11,46.353) (120,11,46.857) (124,11,47.343) (128,11,47.814) (132,11,48.270) (136,11,48.712) (140,11,49.141) (144,11,49.557) (148,11,49.962) (152,11,50.356) (156,11,50.739) (160,11,51.112) (164,11,51.476) (168,11,51.831) (172,11,52.178) (176,11,52.516) (180,11,52.847) (184,11,53.170) (188,12,57.511) (192,12,57.852) (196,12,58.186) (200,12,58.514) (204,12,58.834) (208,12,59.148) (212,12,59.456) (216,12,59.759) (220,12,60.055) (224,12,60.346) (228,12,60.632) (232,12,60.913) (236,12,61.189) (240,12,61.460) (244,12,61.727) (248,12,61.989) (252,12,62.247) (256,12,62.501)};
    \node[font=\footnotesize, fill=white, fill opacity=0.85, text opacity=1, anchor=west]
      at (axis cs:256,14,236) {$z=n$ (all subsets)};
    \node[font=\footnotesize, text=red!75!black, anchor=west]
      at (axis cs:256,11,40) {$d=d(n)$};
  \end{axis}
\end{tikzpicture}
\caption{The surface $\log_2\sum_{j<d}\binom nj$ over $8\le n\le256$ and
$1\le d\le14$, with the plane $z=n$ of the trivial bound $2^{n}$. The red
curve follows the least admissible $d=d(n)$ of Theorem~\ref{thm:upper},
along which the bound stays far below the plane.}
\label{fig:surface}
\end{figure}

\begin{proof}[Proof of Corollary~\ref{cor:conj}]
(i) Let $n\ge2$, so that $\ell=\log_2\log_2n\ge0$. If $\omega(n)\le0$
there is nothing to prove, since $F(n)\ge1$. Otherwise
$x=\ell-2\log_2(\ell+2)>1$, and the integer $t=\lfloor x\rfloor$
satisfies $t\ge1$ and $\omega(n)=x-1<t\le x$. As $t\le\ell$ we have
$\log_2(t+2)\le\log_2(\ell+2)$, and so
\[
\log_2\bigl((t+2)^{2}2^{t}\bigr)=t+2\log_2(t+2)\ \le\ x+2\log_2(\ell+2)=\ell ,
\]
that is, $(t+2)^{2}2^{t}\le\log_2n$. Theorem~\ref{thm:lower} gives
$F(n)\ge n^{t}\ge n^{\omega(n)}$. As $\ell\to\infty$ and
$\log_2(\ell+2)=o(\ell)$, we have $\omega(n)\to\infty$.

(ii) Put $\eps(n)=F(n)^{1/n}-1+1/n$ for $n\ge1$. Then
$(1+\eps(n))^{n}=(F(n)^{1/n}+1/n)^{n}>F(n)$. By Theorem~\ref{thm:upper},
$\log F(n)=O((\log n)^{2})$, so $F(n)^{1/n}\to1$ and $\eps(n)\to0$.

(iii) Let $n\ge2$ and write $\lambda=\log_2n$ and
$D=\lambda+\log_2\lambda+3$. Theorem~\ref{thm:upper} gives
$n^{\omega(n)}\le F(n)\le D\,n^{D-1}$, hence
\[
\omega(n)\ \le\ D-1+\frac{\log_2D}{\log_2n}
\ =\ \lambda+O(\log\lambda),
\]
which is at most $(1+\eps)\lambda$ for all large $n$.

(iv) For every $t\ge1$, Theorem~\ref{thm:lower} gives $F_k(n)\ge
c_{k,t}n^{t+1}\ge n^{t}$ as soon as $n\ge1/c_{k,t}$. Let $\omega_k(n)$ be
the largest integer $t\le n$ with $n^{t}\le F_k(n)$; the value $t=0$
qualifies because $F_k(n)\ge1$. Then $\omega_k(n)\ge t$ whenever $n\ge
\max(t,1/c_{k,t})$, so $\omega_k(n)\to\infty$. For the second statement
take $\eps_k=\eps$ from (ii) and use $F_k(n)\le F(n)$.
\end{proof}

The random colouring does contain large monochromatic union-closed
families. How large they are indicates how much room is left between
Theorems~\ref{thm:lower} and~\ref{thm:upper}.

\begin{proposition}
\label{prop:random}
Let $n\ge1$. The average, over all $2$-colourings $\chi$ of $2^{[n]}$, of
the largest size of a monochromatic union-closed family is at least
\[
\sum_{j=0}^{n}\binom nj\,2^{1-2^{j}} .
\]
If $1\le j\le n$ and $2^{2^{j}}\le n$, this average is at least
$(2/n)(n/j)^{j}$.
\end{proposition}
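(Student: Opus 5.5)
Fix a set $X\subseteq[n]$ with $|X|=j$ and consider the family $\cF_X=2^{X}$ of all subsets of $X$. It is union-closed (indeed a Boolean algebra) and has $2^{j}$ members, but using it directly gives a probability $2^{1-2^{j}}$ of being monochromatic with only $2^{j}$ members, which is the wrong weighting. The right family is a union-closed family attached to $X$ with $2^{j}$ members, counted once for each of the $\binom nj$ choices of $X$. The plan is to take, for each $j$, the family $\cF_X=\{S:\ S\subseteq X\}$, union-closed with $2^j$ members. I would then lower bound the maximum size of a monochromatic union-closed family by the maximum, over all $X$ that happen to be monochromatic on $2^X$, of $|2^X|$. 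This is too crude, so instead use
\[
\max\ \ge\ \frac{1}{\,\cdot\,}\sum\ \dots
\]
More simply: the maximum over colour classes is at least the number of sets $X$ with $2^X$ monochromatic \emph{of a fixed colour} forming a family closed under union.

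The cleaner route is as follows. For each colour $c$, let $\cF_c$ be the family of all $X\subseteq[n]$ such that every subset of $X$ has colour $c$. If $X,X'\in\cF_c$, the union $X\cup X'$ need not lie in $\cF_c$, so instead use down-closed witnesses. I would therefore switch to the dual: let $\cF_c$ be the set of $X$ such that every superset... this fails similarly. The robust choice is to note that the family $\{X:\ 2^X\text{ monochromatic}\}$, taken together with all the sets in each such $2^X$, is down-closed, and a down-closed family of one colour... this still fails under unions.

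So I commit to the simplest valid argument. The maximum is at least $\max_c|\{X:\ \chi\equiv c\text{ on }2^X\}|$? That is not a union-closed family either. The fallback, which does work, is to pick for each $X$ the union-closed family $2^X$. Since the largest monochromatic union-closed family has size at least $2^{|X|}\ge1$ whenever $2^X$ is monochromatic, and at least $1$ always, I would bound the maximum by a sum with disjoint supports. Concretely, the claimed expression is
\[
\sum_j\binom nj 2^{1-2^j}=\Ex\,\bigl|\{X:\ 2^X\text{ monochromatic}\}\bigr|,
\]
by linearity, since each $2^X$ consists of $2^{|X|}$ distinct sets. So the key step, and the main obstacle, is to show that the largest monochromatic union-closed family is at least the number $N$ of $X$ with $2^X$ monochromatic. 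For this, let $\cF$ be the set of such $X$ of the majority colour $c$. If $X,X'\in\cF$, then every subset of $X\cup X'$ must be checked, which is not automatic. Instead I would take the monochromatic union-closed family to be the set of all members of $\bigcup_{X\in\cF}2^X$, all of colour $c$, and close it under union within colour $c$. I expect to have to replace this by a chain or sunflower argument, and I am uncertain that the first claim goes through exactly as stated.

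For the second assertion, keep only the term for a single $j$: $\binom nj2^{1-2^j}\ge (n/j)^j\cdot 2/2^{2^j}\ge(2/n)(n/j)^j$, using $\binom nj\ge(n/j)^j$ and $2^{2^j}\le n$.
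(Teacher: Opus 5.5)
There is a genuine gap: the first assertion is never proved, and the step you call the key one is false as stated. The sets $X$ with $2^X$ monochromatic all have the colour of $\emptyset$ and form a down-set. A down-set is closed under intersection, not union. The largest monochromatic union-closed family need not have $N=|\{X:\ 2^X\text{ monochromatic}\}|$ members. For $n=2$, colour $\emptyset,\{1\},\{2\}$ red and $\{1,2\}$ blue. Then $N=3$, but every monochromatic union-closed family has at most $2$ members, since a red one cannot contain both $\{1\}$ and $\{2\}$. Closing $\bigcup_X 2^X$ under union within one colour does not help either, because the unions may have the other colour. So no pointwise comparison with $N$ can succeed. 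Your identity $\sum_j\binom nj 2^{1-2^j}=\Ex N$ is correct, but it is attached to a family that is not union-closed.

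The missing idea is exactly the dual you considered and dismissed with ``this fails similarly''. Put
\[
\cF=\{S\subseteq[n]:\ \chi(S')=\chi([n])\text{ for every }S'\supseteq S\}.
\]
This works. Every superset of $S\cup S'$ is a superset of $S$, so $\cF$ is an up-set, hence union-closed. It is monochromatic of colour $\chi([n])$.

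For the count, a set $S$ with $|S|=n-j$ has $2^j$ distinct supersets. It lies in $\cF$ exactly when the $2^j-1$ of them other than $[n]$ match the colour of $[n]$, which has probability $2^{1-2^j}$. Linearity of expectation and $\binom n{n-j}=\binom nj$ give $\Ex|\cF|=\sum_j\binom nj2^{1-2^j}$. This is the paper's proof, written there with complements $Y=[n]\setminus S$. In fact $|\cF|$ and your $N$ are equidistributed under the complementation $\chi\mapsto\chi([n]\setminus\cdot\,)$, which is why your expectation came out right.

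Your deduction of the second assertion is correct and matches the paper: keep the single term for $j$ and use $\binom nj\ge(n/j)^j$ and $2^{2^j}\le n$. It relies on the first assertion, so it stands only once that is fixed as above.
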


\begin{proof}
Let $c=\chi([n])$ and let $\cY$ be the set of $Y\subseteq[n]$ such that
$\chi([n]\setminus Y')=c$ for every $Y'\subseteq Y$. Then $\cY$ is closed
under taking subsets and under intersection, so
$\cF=\{[n]\setminus Y:\ Y\in\cY\}$ is monochromatic and union-closed,
because $([n]\setminus Y)\cup([n]\setminus Y')=[n]\setminus(Y\cap Y')$.
A set $Y$ of size $j$ lies in $\cY$ exactly when the $2^{j}-1$ distinct
sets $[n]\setminus Y'$ with $\emptyset\ne Y'\subseteq Y$ all have colour
$c$, which happens for a proportion $2^{1-2^{j}}$ of all colourings.
Summing over $Y$ gives the average of $|\cF|$, which is the first bound.
For the second, the term of index $j$ is at least
$\binom nj2^{1-2^{j}}\ge(n/j)^{j}\cdot2/n$, since $\binom nj\ge(n/j)^{j}$
and $2^{2^{j}}\le n$.
\end{proof}

For $n\ge4$ and $j=\lfloor\log_2\log_2n\rfloor$ the conditions of
Proposition~\ref{prop:random} hold, and $(2/n)(n/j)^{j}\ge
n^{\log_2\log_2n-2-o(1)}$ because $j>\log_2\log_2n-1$ and
$j^{j}=n^{o(1)}$. So for the random
colouring the exponent $\omega$ is at least about $\log_2\log_2n$, while
Theorem~\ref{thm:upper}, which is also proved with the random colouring,
allows it to be as large as $\log_2n$. The next two statements turn the
upper bound for the random colouring into a counting problem.

\begin{lemma}
\label{lem:subfamily}
If $\cF\subseteq2^{[n]}$ is union-closed and $N\le|\cF|$, then $\cF$
contains a union-closed subfamily with exactly $N$ members.
\end{lemma}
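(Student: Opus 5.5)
The plan is to remove members one at a time while keeping union-closedness, and to induct downward on $|\cF|$. If $N=|\cF|$ there is nothing to prove. The case $N=0$ is handled by the empty family, which is vacuously union-closed. So suppose $1\le N<|\cF|$. It suffices to produce a union-closed subfamily $\cF'\subseteq\cF$ with $|\cF'|=|\cF|-1$. Repeating this step $|\cF|-N$ times, which is formally an induction on $|\cF|-N$, then gives a subfamily of exactly $N$ members.

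The key step is the choice of which member to delete. Since $\cF$ is a nonempty finite family, it has a member $A$ that is minimal with respect to inclusion. Put $\cF'=\cF\setminus\{A\}$. I claim $\cF'$ is union-closed. Take $B,C\in\cF'$. Their union $B\cup C$ lies in $\cF$ because $\cF$ is union-closed, so it is enough to check that $B\cup C\ne A$. If we had $B\cup C=A$, then $B\subseteq A$ with $B\in\cF$. Minimality of $A$ would then force $B=A$, contradicting $B\in\cF'$. Hence $B\cup C\in\cF'$.

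An alternative would be to delete the top set $\bigcup\cF$, but that need not preserve closure, because two other sets can have the top as their union. Deleting a minimal set is the choice that works, since a minimal set can never arise as a union of two sets other than itself. I expect no real obstacle. The only points needing care are the degenerate cases: the lemma for $N=0$, which relies on the empty family counting as union-closed, and the fact that a minimal member exists whenever $\cF\ne\emptyset$, which holds because $\cF$ is finite.
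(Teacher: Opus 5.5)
Your proposal is correct and is essentially the paper's proof. The paper deletes a member of least size, which is in particular inclusion-minimal, and uses the same observation: $B\cup C=A$ with $B\in\cF$ forces $B=A$.
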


\begin{proof}
It suffices to remove one member when $|\cF|>N$. Let $A$ be a member of
least size. If $B,C\in\cF\setminus\{A\}$ satisfied $B\cup C=A$, then
$B\subseteq A$ and $|B|\ge|A|$ would give $B=A$. So $\cF\setminus\{A\}$
is union-closed.
\end{proof}

\begin{proposition}
\label{prop:count}
Let $U(n,N)$ be the number of union-closed families $\cF\subseteq
2^{[n]}$ with $|\cF|=N$. If $U(n,N)<2^{N-1}$, then $F(n)<N$.
\end{proposition}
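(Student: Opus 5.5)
We argue with the uniformly random $2$-colouring $\chi$ of $2^{[n]}$, in which the $2^{n}$ colours are independent and uniform, and apply a first-moment bound. The aim is to show that with positive probability no union-closed family of size exactly $N$ is monochromatic. Lemma~\ref{lem:subfamily} then transfers this to all larger families. Any monochromatic union-closed family with at least $N$ members would contain a union-closed subfamily with exactly $N$ members, and that subfamily is still monochromatic. So for such a colouring every monochromatic union-closed family has fewer than $N$ members, and since $F(n)$ is a minimum over colourings, $F(n)<N$.

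For the first moment, fix a union-closed family $\cF$ with $|\cF|=N$. Its $N$ members are distinct sets with independent uniform colours, so $\cF$ is monochromatic with probability exactly $2\cdot2^{-N}=2^{1-N}$. By linearity of expectation, the expected number of monochromatic union-closed families of size $N$ is $U(n,N)\,2^{1-N}$. The hypothesis $U(n,N)<2^{N-1}$ makes this expectation less than $1$. Hence some colouring has no monochromatic union-closed family of size $N$, and the previous paragraph finishes the argument.

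There is no real obstacle here. The only points needing care are that the count is over families of \emph{exactly} $N$ members, which is exactly what Lemma~\ref{lem:subfamily} makes sufficient, and that the members of $\cF$ are distinct, so the monochromatic probability is exactly $2^{1-N}$. The degenerate case $N\le0$ does not arise, because $U(n,N)<2^{N-1}$ forces $N\ge1$: for $N=0$ the empty family gives $U(n,0)=1>2^{-1}$.
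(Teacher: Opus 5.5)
Your proposal is correct and follows the paper's proof essentially verbatim: a first-moment bound over union-closed families of exactly $N$ members in a uniformly random $2$-colouring (the paper counts colourings, $U(n,N)\,2^{2^{n}+1-N}<2^{2^{n}}$, rather than taking an expectation), followed by Lemma~\ref{lem:subfamily} to rule out larger monochromatic union-closed families. Your remark that the hypothesis forces $N\ge1$ is a harmless extra check that the paper leaves implicit.
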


\begin{proof}
A fixed family of $N$ distinct sets is monochromatic under exactly
$2\cdot2^{2^{n}-N}$ of the $2^{2^{n}}$ colourings. The colourings under
which some union-closed family of $N$ sets is monochromatic are therefore
at most $U(n,N)\,2^{2^{n}+1-N}<2^{2^{n}}$ in number, and some colouring
has no monochromatic union-closed family of exactly $N$ members. By
Lemma~\ref{lem:subfamily} it has none with $N$ or more members, since a
subfamily of a monochromatic family is monochromatic. Hence
$F(n)\le N-1$.
\end{proof}

Proposition~\ref{prop:count} is the first-moment method for a uniformly
random colouring, and Corollary~\ref{cor:conj}(i) shows where it must
fail: if $N\le n^{\omega(n)}\le F(n)$, then $U(n,N)\ge2^{N-1}$. We return
to this in Question~\ref{q:count}.

\section{Cardinality colourings, Hilbert cubes and arithmetic progressions}
\label{sec:card}

In this section $k\ge2$ and $\chi(S)=g(|S|)$ for a function
$g:\{0,1,\dots,n\}\to[k]$. A $t$-cube of shape $(u;a_1,\dots,a_t)$
consists of sets whose sizes are the elements of $H(u;a_1,\dots,a_t)$, and
every permutation preserves sizes, so
\begin{equation}
\label{eq:rho01}
\rho_\chi(u;a_1,\dots,a_t)=
\begin{cases}1&\text{if $g$ is constant on }H(u;a_1,\dots,a_t),\\
0&\text{otherwise.}\end{cases}
\end{equation}
Figure~\ref{fig:hilbert} shows a monochromatic Hilbert cube of dimension
two.

\begin{figure}[t]
\centering
\begin{tikzpicture}[x=0.58cm,
    r/.style={circle, draw=red!55!black, fill=red!75!black, inner sep=0pt, minimum size=6.5pt},
    b/.style={rectangle, draw=blue!55!black, fill=blue!15, line width=0.8pt, inner sep=0pt, minimum size=6pt},
    arc/.style={line width=0.8pt, -{Stealth[length=4.5pt]}},
    num/.style={font=\scriptsize, inner sep=1.5pt}]
  \draw[gray!60, line width=0.5pt] (-0.4,0) -- (20.4,0);
  \foreach \k/\c in {0/b,1/r,2/b,3/b,4/r,5/r,6/b,7/r,8/r,9/b,10/b,11/r,12/r,13/b,14/r,15/r,16/b,17/b,18/r,19/b,20/r} {
    \node[\c] (p\k) at (\k,0) {};
  }
  \foreach \k in {5,8,12,15} { \draw[red!75!black, line width=1pt] (\k,0) circle (7pt); }
  \draw[arc, orange!80!black] (p5.north) to[bend left=35]
    node[midway, above, font=\footnotesize] {$+a_2$} (p12.north);
  \draw[arc, teal!60!black] (p12.north east) to[bend left=50]
    node[midway, above, font=\footnotesize] {$+a_1$} (p15.north west);
  \draw[arc, teal!60!black] (p5.south east) to[bend right=50]
    node[midway, below, font=\footnotesize] {$+a_1$} (p8.south west);
  \draw[arc, orange!80!black] (p8.south east) to[bend right=35]
    node[midway, below, font=\footnotesize] {$+a_2$} (p15.south west);
  \node[num, below=5pt] at (0,0) {$0$};
  \node[num, below=5pt] at (20,0) {$20$};
  \node[num, below left=4pt and 0pt] at (5,0) {$u=5$};
  \node[num, above=6pt] at (8,0) {$8$};
  \node[num, below=6pt] at (12,0) {$12$};
  \node[num, below right=4pt and 0pt] at (15,0) {$15$};
  \node[font=\small, anchor=west] at (-0.4,2.0)
    {$H(u;a_1,a_2)=\{u,\ u+a_1,\ u+a_2,\ u+a_1+a_2\}$ with $u=5$, $a_1=3$, $a_2=7$};
\end{tikzpicture}
\caption{A colouring $g$ of $\{0,1,\dots,20\}$, red discs and blue
squares, and a red Hilbert cube $H(5;3,7)=\{5,8,12,15\}$. For the
cardinality colouring $\chi(S)=g(|S|)$, every $2$-cube of shape $(5;3,7)$
is red.}
\label{fig:hilbert}
\end{figure}

Since $\rho_\chi$ takes only the values $0$ and $1$ here,
Proposition~\ref{prop:saturation} turns into Hilbert's lemma with an
explicit window.

\begin{corollary}
\label{cor:hilbertlemma}
Let $k\ge2$, $t\ge1$, $M=t^{2}k^{2^{t}}$ and $r\ge0$. Every
$g:\{r,r+1,\dots,r+M\}\to[k]$ is constant on some Hilbert cube
$H(u;a_1,\dots,a_t)$ with $a_i\ge1$, $u\ge r$ and $u+a_1+\dots+a_t\le
r+M$.
\end{corollary}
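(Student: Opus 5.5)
The plan is to realise $g$ as a cardinality colouring on a ground set large enough for Proposition~\ref{prop:saturation} to apply, and then read its conclusion through \eqref{eq:rho01}.

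\emph{Setup.} Put $n=r+M$ and extend $g$ arbitrarily to $\{0,1,\dots,n\}$, for instance by the constant value $1$ on $\{0,\dots,r-1\}$. Let $\chi(S)=g(|S|)$ be the resulting cardinality colouring of $2^{[n]}$. The condition $r+M\le n$ of Proposition~\ref{prop:saturation} then holds with equality.

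\emph{Applying supersaturation.} Use $L=L_k(t)$ and $M=M_k(t)=t^{2}k^{2^{t}}$. Proposition~\ref{prop:saturation} gives integers $u\ge r$ and $a_1,\dots,a_t\ge1$ with $(u-r)+a_1+\dots+a_t\le M$ and $\rho_\chi(u;a_1,\dots,a_t)\ge k^{-2^{t}}>0$. The size condition rearranges to $u+a_1+\dots+a_t\le r+M$, which is exactly the window required in Corollary~\ref{cor:hilbertlemma}. Note that this is an admissible shape, since $u+\sum a_i\le n$.

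\emph{Reading off the cube.} By \eqref{eq:rho01}, $\rho_\chi$ takes only the values $0$ and $1$, so positivity forces $\rho_\chi(u;a)=1$. That is, $g$ is constant on $H(u;a_1,\dots,a_t)$. Every element of this Hilbert cube lies in $[u,\,u+\sum a_i]\subseteq[r,r+M]$, so only the original values of $g$ are involved and the arbitrary extension below $r$ plays no role.

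There is no real obstacle here. The one point to check is that the extension of $g$ never affects the cube, and this follows from $u\ge r$ together with the window bound.
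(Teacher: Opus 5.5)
Your proposal is correct and follows the paper's proof essentially verbatim: extend $g$ below $r$, apply Proposition~\ref{prop:saturation} with $n=r+M$, $L=L_k(t)$, $M=M_k(t)$ to the cardinality colouring, and use \eqref{eq:rho01} to upgrade $\rho_\chi\ge k^{-2^{t}}>0$ to $\rho_\chi=1$. Your remark that the Hilbert cube lies in $[u,\,u+\sum_i a_i]\subseteq[r,r+M]$, so the extension never matters, is exactly the paper's closing observation.
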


\begin{proof}
Extend $g$ arbitrarily to $\{0,\dots,r-1\}$ and apply
Proposition~\ref{prop:saturation}, with $n=r+M$, $L=L_k(t)$ and
$M=M_k(t)$, to the cardinality colouring $S\mapsto g(|S|)$ of
$2^{[r+M]}$. It gives a shape with $r\le u$, $(u-r)+\sum_ia_i\le M$ and
$\rho_\chi(u;a)\ge k^{-2^{t}}>0$, so $\rho_\chi(u;a)=1$ and $g$ is
constant on $H(u;a)$ by \eqref{eq:rho01}. The elements of $H(u;a)$ lie
between $u\ge r$ and $u+\sum_ia_i\le r+M$.
\end{proof}

\begin{proof}[Proof of Theorem~\ref{thm:hilbert}]
Consider the cardinality colouring $\chi(S)=g(|S|)$ and the construction
of Section~\ref{sec:lower}, whose hypotheses hold since $n\ge4M(t+1)$
gives $s\ge n/(4M)\ge t+1$. By \eqref{eq:rho01}, a vector $\beta$ is good
for $(j,A)$ precisely when $g$ is constant on the Hilbert cube formed by
the sizes of the members of $Q_{\mathrm{id},\beta}(j,A)$, and these sizes
are
\[
|C_j\setminus Y_B|=\beta_1+\dots+\beta_j-\sum_{b\in B}\beta_b
\qquad(B\subseteq A),
\]
that is, the elements of $H_\beta(j,A)$. By Lemma~\ref{lem:slices}, for
every $(j,A)\in\cP$ at least $M^{m-t-1}$ vectors $\beta$ make
$H_\beta(j,A)$ monochromatic. Averaging over the $M^{m}$ vectors, as in
Lemma~\ref{lem:average} but without the permutations, some $\beta$ makes
at least $|\cP|/M^{t+1}$ of the cubes $H_\beta(j,A)$ monochromatic, and
at least a proportion $1/k$ of these share a colour $c$. The estimate
$|\cP|\ge n^{t+1}/(t^{t}(4M)^{t+1})$ from the proof of
Theorem~\ref{thm:lower} completes the proof. All elements of the cubes
lie in $\{0,\dots,n\}$, since $\beta_1+\dots+\beta_m\le mM\le n-1$.
\end{proof}

Given $\beta$ and $c$ as in Theorem~\ref{thm:hilbert}, the cubes
$Q_{\mathrm{id},\beta}(j,A)$ attached to the pairs of colour $c$ are
monochromatic under $\chi$, and Proposition~\ref{prop:glue} glues them
into a single union-closed family of colour $c$. For cardinality
colourings the proof of Theorem~\ref{thm:lower} is therefore entirely a
statement about sums of a bounded sequence of integers.

There is also a direct route through arithmetic progressions, which gives
Howorka's theorem.

\begin{proposition}
\label{prop:AP}
Suppose that $g$ is constant on the arithmetic progression
$a,a+e,\dots,a+(p-1)e$, where $e\ge1$, $p\ge1$ and $a+(p-1)e\le n$. Put
$q=\lfloor a/e\rfloor$ and $b=q+p-1$. Then $\chi$ has a monochromatic
union-closed family with $\sum_{i=0}^{p-1}\binom bi$ members and a
monochromatic sublattice with $2^{p-1}$ members.
\end{proposition}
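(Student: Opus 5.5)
The plan is to realise the progression of sizes directly as the sizes of members of one cube, and to take suitable subfamilies of that cube. Write $a=qe+r_0$ with $0\le r_0<e$, which is possible by the definition $q=\lfloor a/e\rfloor$. Then
\[
r_0+be=r_0+(q+p-1)e=a+(p-1)e\le n .
\]
So we can choose pairwise disjoint sets $R,X_1,\dots,X_b\subseteq[n]$ with $|R|=r_0$ and $|X_i|=e\ge1$ for every $i$. For $I\subseteq[b]$ the set $R\cup X_I$ has size $r_0+|I|e$. This size equals $a+(|I|-q)e$, and it lies in the progression $a,a+e,\dots,a+(p-1)e$ exactly when $q\le|I|\le b$.

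For the union-closed family, take
\[
\cF=\{R\cup X_I:\ I\subseteq[b],\ |I|\ge q\}.
\]
\begin{itemize}
\item Every member of $\cF$ has size in the progression, since $|I|\le b$ holds automatically. As $g$ is constant on the progression and $\chi(S)=g(|S|)$, the family $\cF$ is monochromatic.
\item The union rule $(R\cup X_I)\cup(R\cup X_J)=R\cup X_{I\cup J}$, together with $|I\cup J|\ge|I|\ge q$, shows that $\cF$ is union-closed.
\item The sets $R\cup X_I$ are distinct for distinct $I$, because the $X_i$ are disjoint and nonempty. Hence
\[
|\cF|=\sum_{i=q}^{b}\binom bi=\sum_{i=0}^{p-1}\binom b{b-i}=\sum_{i=0}^{p-1}\binom bi .
\]
\end{itemize}

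For the sublattice, take the $(p-1)$-cube with base $U=R\cup X_{[q]}$ and generators $X_{q+1},\dots,X_b$. There are exactly $b-q=p-1$ generators. Its members have sizes $a+je$ with $0\le j\le p-1$, so the cube is monochromatic. By the remark after Definition~\ref{def:cube}, a $t$-cube is a sublattice isomorphic to $2^{[t]}$, so this one is a sublattice with $2^{p-1}$ members.

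There is no real obstacle here. The only points needing care are the degenerate cases. When $q=0$ the base is just $R$. When $p=1$ we have $b=q$, so $\cF$ is the single set $R\cup X_{[b]}$ and the cube is a $0$-cube; both conclusions remain valid.
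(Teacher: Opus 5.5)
Your proposal is correct and follows essentially the same route as the paper: a base of size $a-qe$ together with $b$ disjoint generators of size $e$, the union-closed family $\{R\cup X_I:\ |I|\ge q\}$ counted via $\binom bi=\binom b{b-i}$, and a $(p-1)$-cube with base of size $a$ and generators of size $e$ for the sublattice. The only difference is cosmetic: you take the sublattice's base to be $R\cup X_{[q]}$ and reuse $X_{q+1},\dots,X_b$ as generators, whereas the paper chooses fresh sets $D'=\{1,\dots,a\}$ and $X'_1,\dots,X'_{p-1}$.
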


\begin{proof}
Let $a_0=a-qe$, so $0\le a_0<e$, and let $D=\{1,\dots,a_0\}$ and
$X_i=\{a_0+(i-1)e+1,\dots,a_0+ie\}$ for $1\le i\le b$. These sets are
pairwise disjoint subsets of $[n]$, because $a_0+be=a+(p-1)e\le n$, and
$|D|=a_0$ and $|X_i|=e$. The family
\[
\cF=\bigl\{D\cup X_I:\ I\subseteq[b],\ |I|\ge q\bigr\}
\]
is union-closed, since $D\cup X_I\cup D\cup X_{I'}=D\cup X_{I\cup I'}$ and
$|I\cup I'|\ge q$ whenever $|I|\ge q$. A member with index set $I$ has
size $a_0+|I|e=a+(|I|-q)e$, where $0\le|I|-q\le p-1$, so all members have
the colour of the progression, and
\[
|\cF|=\sum_{i=q}^{b}\binom bi=\sum_{i=0}^{p-1}\binom bi
\]
by the symmetry $\binom bi=\binom b{b-i}$. For the sublattice take
$D'=\{1,\dots,a\}$ and $X'_i=\{a+(i-1)e+1,\dots,a+ie\}$ for $1\le i\le
p-1$. The $(p-1)$-cube $Q(D';X'_1,\dots,X'_{p-1})$ is a sublattice with
$2^{p-1}$ members, whose sizes $a+|I|e$ lie in the progression.
\end{proof}

For $k\ge1$ and $p\ge1$ let $W_k(p)$ be the least $W$ such that every
$g:\{0,1,\dots,W-1\}\to[k]$ is constant on some progression
$a,a+e,\dots,a+(p-1)e$ with $e\ge1$. This number exists by van der
Waerden's theorem.

\begin{theorem}[van der Waerden \cite{vdW27}]
\label{thm:vdw}
For all $k\ge1$ and $p\ge1$ there is $W$ such that every
$g:\{0,1,\dots,W-1\}\to[k]$ is constant on an arithmetic progression of
length $p$ with a positive common difference.
\end{theorem}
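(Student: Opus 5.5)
We have to prove van der Waerden's theorem itself, since that is the statement just made. The paper cites \cite{vdW27}, so a citation would suffice, but a self-contained route is available from results already in the paper. The plan is the classical colour-focusing induction on $p$, proving the stronger statement for all $k$ simultaneously. The cases $p\le2$ are trivial: take $W=k+1$ and use pigeonhole, noting that a progression of length $1$ is a single point and one of length $2$ is any two points. For $p=1$, $W=1$ works.

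For the inductive step, assume that $W_k(p)$ exists for every $k$, and aim to show that $W_k(p+1)$ exists. The key intermediate claim is as follows. For each $r\le k$ there is $N(r)$ such that every $k$-colouring of $\{0,\dots,N(r)-1\}$ contains either a monochromatic $(p+1)$-progression, or $r$ \emph{colour-focused} progressions. These are progressions $a_i,a_i+e_i,\dots,a_i+(p-1)e_i$, each monochromatic in a distinct colour $c_i$, all with a common focus $f=a_i+pe_i\le N(r)-1$. The choice $r=1$ comes from the inductive hypothesis.

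To pass from $r$ to $r+1$, partition a long interval into blocks of length $2N(r)$. Colour each block by its full colour pattern; there are $k^{2N(r)}$ possible patterns. Apply the inductive hypothesis with $k^{2N(r)}$ colours to the block indices to get $p$ blocks in arithmetic progression, with block-step $D$, all carrying identical patterns. Inside the first block, find $r$ focused progressions with focus $f$. Translating by multiples of $D\cdot 2N(r)$ produces $r$ progressions with the same colours, focused at $f+p\cdot D\cdot 2N(r)$. The $p$ copies of the focus $f$ form a progression of one colour $c_0$, and this progression is also focused at the same point. If $c_0$ equals some $c_i$, or equals the colour of the new focus, we obtain a monochromatic $(p+1)$-progression; otherwise we have $r+1$ focused progressions. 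The block length $2N(r)$ is chosen so that the focus stays inside the block.

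To conclude, at $r=k$ the focus point must share a colour with one of the $k$ focused progressions, which extends that progression to length $p+1$. The main obstacle is the bookkeeping: making sure every focus lies inside the interval, and that the differences are positive. Both follow if the block length is taken as $2N(r)$ and the outer interval has length $2N(r)\,W_{k^{2N(r)}}(p)$.
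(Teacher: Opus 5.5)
Your colour-focusing induction is the classical proof and its strategy is sound. However, the bookkeeping you single out as the main obstacle is done wrongly, and with your parameters the induction does not close.

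If $N(r+1)=2N(r)\,W$ with $W=W_{k^{2N(r)}}(p)$, the interval consists of exactly the $W$ blocks to which you apply the inductive hypothesis. The block progression therefore only satisfies $j_0+(p-1)D\le W-1$. The new focus $f+p\cdot 2N(r)D$ lies in block $j_0+pD$, and this index can be as large as $2W-2$ (take $p=2$, $j_0=0$, $D=W-1$). So the new focus may fall outside $\{0,\dots,N(r+1)-1\}$, which breaks the requirement $f\le N(r+1)-1$ in your claim.

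That requirement is essential in two places. In the next step the focus must lie inside a block so that its $p$ copies share a colour. At $r=k$ the focus must receive a colour at all.

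Doubling the block length does not help, since your claim already puts the focus among the first $N(r)$ points of the block. The factor $2$ belongs on the number of blocks. Take $2W'$ blocks of length $N(r)$, where $W'=W_{k^{N(r)}}(p)$, apply the inductive hypothesis to the first $W'$ of them, and set $N(r+1)=2N(r)W'$. Since $p\ge2$ in the inductive step, $D\le(p-1)D\le W'-1$ and hence $j_0+pD\le2W'-2$. The same doubling is needed at the start: set $N(1)=2W_k(p)$, because a $p$-term progression in $\{0,\dots,W_k(p)-1\}$ can have its focus as far out as $2W_k(p)-2$. With these changes the argument is correct.

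Once repaired, your argument takes a genuinely different route from the paper's. The paper does not prove van der Waerden's theorem from scratch. It colours each word of length $N$ over $\{0,\dots,p-1\}$ by the value of $g$ at the sum of its letters and applies the Hales--Jewett theorem from Mathlib. A monochromatic combinatorial line with active set $E$ has letter sums $a+i|E|$, $0\le i\le p-1$, all below $W=N(p-1)+1$, so it gives a monochromatic progression with difference $|E|$. That reduction takes a few lines and needs no focusing or interval bookkeeping. It does, however, rest on a more general theorem used as a black box, and nothing proved in the paper supplies it: Proposition~\ref{prop:saturation} covers only the two-letter case of Hales--Jewett, which yields Hilbert cubes rather than progressions. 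Your double induction is self-contained and elementary, at the cost of exactly the bookkeeping above.
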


The formal proof derives Theorem~\ref{thm:vdw} from the Hales--Jewett
theorem.\footnote{Let $N$ be such that every $k$-colouring of the words
of length $N$ over the alphabet $\{0,1,\dots,p-1\}$ has a monochromatic
combinatorial line, and put $W=N(p-1)+1$. Colour a word by the value of
$g$ at the sum of its letters. A combinatorial line has a nonempty set
$E$ of active coordinates, which carry the letter $i$ at its $i$-th
point, and fixed letters elsewhere, with sum $a$ say. The sums of its
points are $a+i|E|$ for $0\le i\le p-1$, all less than $W$, so a
monochromatic line gives a monochromatic progression with common
difference $|E|$.}

\begin{theorem}[Howorka]
\label{thm:howorka}
Let $p\ge2$, let $W=W_k(p)$, and let $n\ge2W$. Every
cardinality $k$-colouring of $2^{[n]}$ has a monochromatic union-closed
family with at least
\[
\binom{\lfloor n/(2W)\rfloor+p-1}{p-1}\ \ge\
\frac{\bigl(\lfloor n/(2W)\rfloor+1\bigr)^{p-1}}{(p-1)!}
\]
members.
\end{theorem}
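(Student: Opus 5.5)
The plan is to find a monochromatic progression of sizes with small common difference and small starting point, and then feed it into Proposition~\ref{prop:AP}. The bound $\binom{b}{p-1}$ with $b=q+p-1$ and $q=\lfloor a/e\rfloor$ shows what we need: a progression whose first term $a$ is large compared with its difference $e$. We want $q\ge\lfloor n/(2W)\rfloor$, so that
\[
\sum_{i=0}^{p-1}\binom bi\ \ge\ \binom{b}{p-1}\ \ge\ \binom{\lfloor n/(2W)\rfloor+p-1}{p-1}.
\]

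The way to force $a$ to be large while keeping $e$ and the window small is to apply van der Waerden to a shifted window. Put $r=n-W$; this is at least $n/2$ because $n\ge 2W$. Consider $g':\{0,\dots,W-1\}\to[k]$ given by $g'(x)=g(r+x)$. This is defined since $r+W-1\le n$. By the definition of $W=W_k(p)$, $g'$ is constant on some progression $a',a'+e,\dots,a'+(p-1)e$ with $e\ge1$, all of whose terms lie in $\{0,\dots,W-1\}$. In particular $(p-1)e\le W-1$, so $e\le W$ (indeed $e<W$).

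Now set $a=r+a'$. Then $g$ is constant on $a,a+e,\dots,a+(p-1)e$, and the last term is at most $n$. Proposition~\ref{prop:AP} applies and gives a monochromatic union-closed family with $\sum_{i=0}^{p-1}\binom bi$ members, where $q=\lfloor a/e\rfloor\ge\lfloor (n-W)/W\rfloor$. Since $n\ge 2W$,
\[
\frac{n-W}{W}=\frac nW-1\ \ge\ \frac{n}{2W},
\]
so $q\ge\lfloor n/(2W)\rfloor$. Because $\binom{b}{p-1}$ is increasing in $b$, this gives the first inequality.

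The second inequality is routine. Write $N=\lfloor n/(2W)\rfloor$. Then
\[
\binom{N+p-1}{p-1}=\prod_{i=1}^{p-1}\frac{N+i}{i}\ \ge\ \frac{(N+1)^{p-1}}{(p-1)!},
\]
because $N+i\ge N+1$ for each $i\ge1$.

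There is no real obstacle here. The one step requiring care is the choice of shift $r$ so that $a/e$ is of order $n/W$. A naive application of van der Waerden to $\{0,\dots,W-1\}$ could give $a=0$, and then Proposition~\ref{prop:AP} yields only about $2^{p-1}$ members. The shift fixes this, and the hypothesis $n\ge2W$ is exactly what makes $(n-W)/W\ge n/(2W)$ hold.
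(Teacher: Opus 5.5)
Your proof is correct and is essentially the paper's argument. The paper also applies the definition of $W$ to the window $\{n-W,\dots,n-1\}$ and bounds $\lfloor a/e\rfloor\ge\lfloor (n-W)/W\rfloor\ge\lfloor n/(2W)\rfloor$, via $n-W\ge\lfloor n/2\rfloor$ rather than your direct inequality $(n-W)/W\ge n/(2W)$, then finishes with Proposition~\ref{prop:AP}, the monotonicity of $\binom b{p-1}$ and the same product estimate. Your opening phrase ``small starting point'' should read ``starting point large relative to $e$'', which is what the argument actually uses.
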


\begin{proof}
The integers $n-W,n-W+1,\dots,n-1$ lie in $\{0,\dots,n\}$, and by the
definition of $W$ the function $g$ is constant on a progression
$a,a+e,\dots,a+(p-1)e$ among them, with $e\ge1$. Here $a\ge n-W$ and
$a+(p-1)e\le n-1$, and $e\le(p-1)e\le W-1$ because $p\ge2$. Since $n\ge
2W$ we have $n-W\ge\lfloor n/2\rfloor$, and so
\[
\Bigl\lfloor\frac ae\Bigr\rfloor\ \ge\ \Bigl\lfloor\frac{n-W}{W}\Bigr\rfloor
\ \ge\ \Bigl\lfloor\frac{\lfloor n/2\rfloor}{W}\Bigr\rfloor
=\Bigl\lfloor\frac n{2W}\Bigr\rfloor .
\]
Proposition~\ref{prop:AP} gives a monochromatic union-closed family with
at least $\binom b{p-1}$ members, where $b=\lfloor a/e\rfloor+p-1\ge
\lfloor n/(2W)\rfloor+p-1$, and $\binom b{p-1}$ increases with $b$.
Finally, with $N=\lfloor n/(2W)\rfloor$,
\[
\binom{N+p-1}{p-1}=\frac{(N+1)(N+2)\cdots(N+p-1)}{(p-1)!}\ \ge\
\frac{(N+1)^{p-1}}{(p-1)!}. \qedhere
\]
\end{proof}

For fixed $p$ the bound of Theorem~\ref{thm:howorka} is at least a
constant multiple of $n^{p-1}$, and $p$ is arbitrary. So for cardinality
colourings the largest monochromatic union-closed family grows faster
than every fixed power of $n$, which is Howorka's theorem as recorded in
\cite{Bl26}; for all colourings this is contained in
Corollary~\ref{cor:conj}. Proposition~\ref{prop:AP} also restricts the
cardinality colourings that are good for $f$: if no monochromatic
sublattice has more than $m$ members, then $g$ is constant on no
progression of length $p$ in $\{0,\dots,n\}$ with $2^{p-1}>m$.

\section{The lattice function}
\label{sec:f}

The lower bound in Theorem~\ref{thm:f} is \eqref{eq:trivial}. The upper
bounds come from a random colouring and a count of sublattices, which in
turn rests on a description of a sublattice by a small amount of data.

Let $\cL\subseteq2^{[n]}$ be a nonempty sublattice, and put
$B=\bigcap_{A\in\cL}A$ and $T=\bigcup_{A\in\cL}A$, the least and the
greatest member of $\cL$. For $x\in T\setminus B$ let
\[
A_x=\bigcap\{A\in\cL:\ x\in A\},
\]
the least member of $\cL$ containing $x$, and for $X\subseteq
T\setminus B$ put $A_X=\bigcup_{x\in X}A_x$. For $x\in T\setminus B$ let
$[x]=\{y\in T\setminus B:\ A_y=A_x\}$ be the \emph{class} of $x$, and let
$D_x=\{y\in T\setminus B:\ A_y\subsetneq A_x\}$.

\begin{lemma}
\label{lem:structure}
Let $\cL$ be a nonempty sublattice of $2^{[n]}$.
\begin{enumerate}[label=\textup{(\alph*)},leftmargin=2em]
\item Every $A\in\cL$ equals $B\cup A_{A\setminus B}$, and $B\cup
A_X\in\cL$ for every $X\subseteq T\setminus B$. Hence
$\cL=\{B\cup A_X:\ X\subseteq T\setminus B\}$.
\item For $x,y\in T\setminus B$ we have $y\in A_x$ if and only if
$A_y\subseteq A_x$.
\item For $x\in T\setminus B$ we have $A_x=B\cup[x]\cup A_{D_x}$.
\end{enumerate}
\end{lemma}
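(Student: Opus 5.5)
We prove the three parts in the order (b), (a), (c), using only that $\cL$ is finite and closed under union and intersection.

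Two preliminary facts are needed. First, $A_x\in\cL$ as a finite intersection of members, and $x\in A_x$ because $x\in T$ and $T\in\cL$. Second, $B\subseteq A_x\subseteq T$, since $B\subseteq A$ for every $A\in\cL$.

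For (b), suppose first that $A_y\subseteq A_x$. Then $y\in A_y\subseteq A_x$. Conversely, suppose $y\in A_x$. Then $A_x$ is a member of $\cL$ containing $y$, so by minimality $A_y\subseteq A_x$.

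For (a), let $A\in\cL$. Every $x\in A\setminus B$ has $x\in A_x\subseteq A$, since $A$ is a member containing $x$. Hence $A=B\cup(A\setminus B)\subseteq B\cup A_{A\setminus B}\subseteq A$, and equality holds throughout. Next take any $X\subseteq T\setminus B$. If $X=\emptyset$, then $B\cup A_X=B\in\cL$. Otherwise $A_X$ is a finite union of members of $\cL$, hence a member, and it contains $B$; so $B\cup A_X=A_X\in\cL$. Together these give the description $\cL=\{B\cup A_X:\ X\subseteq T\setminus B\}$.

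For (c), the right-hand side is contained in $A_x$. Indeed, $B\subseteq A_x$. Each $y\in[x]$ satisfies $y\in A_y=A_x$. Each $y\in D_x$ satisfies $A_y\subsetneq A_x$. For the reverse inclusion, take $y\in A_x$. If $y\in B$, there is nothing to show. Otherwise $y\in T\setminus B$, and (b) gives $A_y\subseteq A_x$. Either $A_y=A_x$, in which case $y\in[x]$, or $A_y\subsetneq A_x$, in which case $y\in D_x$ and $y\in A_y\subseteq A_{D_x}$.

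The only point needing care is the empty case in (a), where $A_\emptyset=\emptyset$ is not itself a member of $\cL$; adjoining $B$ handles it. Otherwise the argument is routine bookkeeping, and there is no real obstacle.
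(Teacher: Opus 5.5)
Your proof is correct and matches the paper's argument essentially step for step; doing (b) before (a) changes nothing, since the paper's proof of (a) does not use (b). One small wording point: $A_\emptyset=\emptyset$ \emph{need not} be a member of $\cL$ (it is one exactly when $B=\emptyset$), rather than never being one, and adjoining $B$ handles both cases.
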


\begin{proof}
(a) Let $A\in\cL$. Then $B\subseteq A\subseteq T$, and $A_x\subseteq A$
for every $x\in A\setminus B$, so $B\cup A_{A\setminus B}\subseteq A$;
conversely every $x\in A$ lies in $B$ or in $A_x$. The set $B\cup A_X$ is
a union of members of $\cL$, or equals $B$ when $X=\emptyset$.

(b) If $y\in A_x$, then $A_x$ is a member containing $y$, so $A_y\subseteq
A_x$. Conversely $y\in A_y$.

(c) The right-hand side is contained in $A_x$: we have $B\subseteq A_x$,
every $y\in[x]$ lies in $A_y=A_x$, and $A_y\subseteq A_x$ for $y\in D_x$.
Let $z\in A_x\setminus B$. Then $z\in T\setminus B$ and $A_z\subseteq A_x$
by (b), so either $A_z=A_x$ and $z\in[x]$, or $z\in D_x$ and $z\in
A_z\subseteq A_{D_x}$.
\end{proof}

Part (a) is the form that Birkhoff's representation theorem for finite
distributive lattices \cite{Bi37} takes for sublattices of
$2^{[n]}$.\footnote{The sets $A_x$ are exactly the members of $\cL$ other
than $B$ that are not the union of two smaller members: $A_x$ is not such
a union because each of the two parts would have to contain $x$, and by
(a) any other member is the union of the sets $A_x$ it contains, all of
them smaller. Birkhoff's theorem identifies a finite distributive lattice
with the down-sets of the poset of these elements.} Part (a) already
shows that $\cL$ is determined by $B$ and the map $x\mapsto A_x$. To count
large sublattices more efficiently we describe each $A_x$ through a small
set of points below it.

\begin{lemma}
\label{lem:cover}
Let $\cL$ be a nonempty sublattice and $x\in T\setminus B$, and let
$R_x\subseteq D_x$ be a set of least size with $A_{R_x}=A_{D_x}$. Then
$A_x=B\cup[x]\cup A_{R_x}$, no set $A_y$ with $y\in R_x$ is contained in
$A_{y'}$ for another $y'\in R_x$, and the $2^{|R_x|}$ sets $B\cup A_Y$,
$Y\subseteq R_x$, are distinct members of $\cL$. In particular
$|R_x|\le\log_2|\cL|$.
\end{lemma}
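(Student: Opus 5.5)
The plan is to prove the four assertions in turn, each from Lemma~\ref{lem:structure} and the minimality of $R_x$.

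\emph{The identity.} By Lemma~\ref{lem:structure}(c), $A_x=B\cup[x]\cup A_{D_x}$, and $A_{D_x}=A_{R_x}$ by the choice of $R_x$. Substituting gives $A_x=B\cup[x]\cup A_{R_x}$ at once.

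\emph{The antichain property.} Suppose $y\ne y'$ in $R_x$ with $A_y\subseteq A_{y'}$. Then $R'=R_x\setminus\{y\}$ still satisfies $A_{R'}=A_{R_x}$, because $A_y$ is absorbed by $A_{y'}$. Since $R'\subseteq D_x$ and $|R'|<|R_x|$, this contradicts minimality. In particular, by Lemma~\ref{lem:structure}(b), $y\notin A_{y'}$ for distinct $y,y'\in R_x$. The case $A_y=A_{y'}$ is included here and is equally excluded.

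\emph{Distinctness.} Each set $B\cup A_Y$ belongs to $\cL$ by Lemma~\ref{lem:structure}(a). To separate them, I will show that $y\in A_Y$ holds if and only if $y\in Y$, for $y\in R_x$ and $Y\subseteq R_x$. If $y\in Y$, then $y\in A_y\subseteq A_Y$. Conversely, if $y\in A_Y$ then $y\in A_{y'}$ for some $y'\in Y$, and the previous paragraph forces $y'=y$. We also need $y\notin B$, which holds since $R_x\subseteq D_x\subseteq T\setminus B$. So $(B\cup A_Y)\cap R_x=Y$, and different $Y$ give different sets. Hence $2^{|R_x|}\le|\cL|$, that is, $|R_x|\le\log_2|\cL|$.

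The only step needing care is the antichain claim, where one must check that deleting a dominated element keeps the union unchanged. It does, since $A_{R'}\supseteq A_{y'}\supseteq A_y$. The rest is bookkeeping.
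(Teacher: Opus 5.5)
Your proposal is correct and follows the paper's proof essentially step for step. The identity comes from Lemma~\ref{lem:structure}(c) combined with $A_{D_x}=A_{R_x}$, and the antichain property from the minimality of $R_x$. Membership in $\cL$ comes from Lemma~\ref{lem:structure}(a), and distinctness from Lemma~\ref{lem:structure}(b) together with $R_x\cap B=\emptyset$. Your identity $(B\cup A_Y)\cap R_x=Y$ is just a symmetric packaging of the paper's choice of an element $y\in Y\setminus Y'$.
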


\begin{proof}
The first statement is Lemma~\ref{lem:structure}(c). If $A_y\subseteq
A_{y'}$ for distinct $y,y'\in R_x$, then $R_x\setminus\{y\}$ would be a
smaller set with the same union $A_{R_x}$. The sets $B\cup A_Y$ lie in
$\cL$ by Lemma~\ref{lem:structure}(a). Let $Y\ne Y'$ be subsets of $R_x$,
say $y\in Y\setminus Y'$. Then $y\in A_y\subseteq B\cup A_Y$. If $y$ lay
in $B\cup A_{Y'}$, then, as $y\notin B$, we would have $y\in A_{y'}$ for
some $y'\in Y'$, so $A_y\subseteq A_{y'}$ by
Lemma~\ref{lem:structure}(b) with $y'\ne y$, which is impossible. So
$B\cup A_Y\ne B\cup A_{Y'}$.
\end{proof}

For each nonempty sublattice fix sets $R_x$ as in Lemma~\ref{lem:cover},
and for $x\in T\setminus B$ let $r(x)$ be the least element of $[x]$.

\begin{lemma}
\label{lem:code}
Let $\cL$ and $\cL'$ be nonempty sublattices of $2^{[n]}$ with the same
least member $B$, the same greatest member $T$, the same map $r$ on
$T\setminus B$ and the same sets $R_x$ for $x\in T\setminus B$. Then
$\cL=\cL'$.
\end{lemma}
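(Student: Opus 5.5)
By Lemma~\ref{lem:structure}(a), a sublattice $\cL$ is determined by $B$ together with the map $x\mapsto A_x$ on $T\setminus B$, since $\cL=\{B\cup A_X:\ X\subseteq T\setminus B\}$. So it suffices to show that the data $(B,T,r,(R_x))$ determine the map $x\mapsto A_x$. Write $A_x$ and $A'_x$ for the least members of $\cL$ and $\cL'$ containing $x$. The goal is to prove $A_x=A'_x$ for every $x\in T\setminus B$.

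I would argue by strong induction on $|A_x|$, and the first step is to identify the classes from the data. The map $r$ sends $x$ to the least element of $[x]$, and $[y]=[x]$ exactly when $r(y)=r(x)$. Hence the class $[x]=\{y\in T\setminus B:\ r(y)=r(x)\}$ is the same in $\cL$ and $\cL'$. By Lemma~\ref{lem:cover},
\[
A_x=B\cup[x]\cup A_{R_x},\qquad A'_x=B\cup[x]\cup A'_{R_x},
\]
using the same $R_x$ in both. So it is enough to know $A_y=A'_y$ for every $y\in R_x$.

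The main obstacle is the induction order, because the induction runs on sizes in $\cL$ while the hypothesis must be applied to $\cL'$ as well. Since $R_x\subseteq D_x$, each $y\in R_x$ satisfies $A_y\subsetneq A_x$, so $|A_y|<|A_x|$ in $\cL$. To keep the argument symmetric, I would induct on the statement $P(N)$: for every $x$ with $\min(|A_x|,|A'_x|)\le N$ we have $A_x=A'_x$. Alternatively, and more cleanly, I would avoid sizes and induct on $x$ along a well-founded relation defined purely from the data, namely the relation ``$y\in R_x$''. This relation is acyclic because $y\in R_x$ forces $A_y\subsetneq A_x$ in $\cL$ (indeed in either lattice). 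So the transitive closure of $y\prec x\iff y\in R_x$ is a strict partial order on the finite set $T\setminus B$, and it is therefore well-founded.

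With this order fixed, the induction proceeds as follows. For a $\prec$-minimal $x$ we have $R_x=\emptyset$, and then $A_x=B\cup[x]=A'_x$. In general, the induction hypothesis gives $A_y=A'_y$ for all $y\in R_x$, hence $A_{R_x}=A'_{R_x}$, and the displayed formulas give $A_x=A'_x$. Finally, Lemma~\ref{lem:structure}(a) applied to both lattices gives $\cL=\cL'$.
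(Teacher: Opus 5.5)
Your proof is correct and follows the paper's argument: you read off the classes from $r$, write $A_x=B\cup[x]\cup A_{R_x}$ and $A'_x=B\cup[x]\cup A'_{R_x}$ by Lemma~\ref{lem:cover}, argue by induction, and conclude with Lemma~\ref{lem:structure}(a). The paper simply inducts on $|A_x|$ in $\cL$ alone, which is already valid because the inductive claim $A_y=A'_y$ only needs $|A_y|<|A_x|$ in $\cL$; so the worry about the induction order is unfounded, and your well-founded relation $y\in R_x$ is a harmless reformulation rather than a needed repair.
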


\begin{proof}
Write $A_x$ and $A'_x$ for the least members containing $x$ in $\cL$ and
$\cL'$. The class of $x$ is $\{y\in T\setminus B:\ r(y)=r(x)\}$ in both
lattices. We show $A_x=A'_x$ for $x\in T\setminus B$ by induction on
$|A_x|$. By Lemma~\ref{lem:cover},
\[
A_x=B\cup[x]\cup A_{R_x}\qquad\text{and}\qquad
A'_x=B\cup[x]\cup A'_{R_x}.
\]
Every $y\in R_x$ satisfies $A_y\subsetneq A_x$, so $|A_y|<|A_x|$ and
$A_y=A'_y$ by induction. Hence $A_x=A'_x$, and
Lemma~\ref{lem:structure}(a) gives $\cL=\cL'$.
\end{proof}

\begin{lemma}
\label{lem:count}
\begin{enumerate}[label=\textup{(\alph*)},leftmargin=2em]
\item The number of nonempty sublattices of $2^{[n]}$ is at most
$2^{n(n+1)}$.
\item For $w\ge0$, the number of nonempty sublattices $\cL$ of $2^{[n]}$
with $\lfloor\log_2|\cL|\rfloor=w$ is at most $2^{2n}\,n^{n}\,(n+1)^{wn}$.
\end{enumerate}
\end{lemma}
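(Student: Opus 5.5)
Both parts are encoding arguments: we bound the number of sublattices by the number of possible values of data that determine them. Lemma~\ref{lem:code} says a nonempty sublattice $\cL$ is determined by the quadruple $\bigl(B,T,r,(R_x)_{x\in T\setminus B}\bigr)$. So it suffices to count such quadruples.

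For (a) we use a cruder code that needs no bound on $|R_x|$. By Lemma~\ref{lem:structure}(a), $\cL$ is determined by $B$ together with the map $x\mapsto A_x$ on $T\setminus B$. Encode this by the following data:
\begin{enumerate}[label=\textup{(\roman*)},leftmargin=2em]
\item the set $T$;
\item the set $B$;
\item for each $x\in T\setminus B$, the set $A_x\setminus B$.
\end{enumerate}
A single string suffices: for each $x\in[n]$, record whether $x\in B$ and whether $x\in T$ (at most $4$ choices, so $2^{2n}$ in all), and for each $x\in T\setminus B$ a subset of $[n]$ ($2^{n}$ choices each). This gives at most $2^{2n}\cdot2^{n\cdot|T\setminus B|}$ codes. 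That is at most $2^{2n+n^{2}}$, slightly too much. To reach $2^{n(n+1)}$ I would tighten the count. The pair $(B,T)$ with $B\subseteq T$ has only $3^{n}$ choices. Each $A_x\setminus B$ is a subset of $T\setminus B$ containing $x$, so it has $2^{|T\setminus B|-1}$ choices. With $j=|T\setminus B|$, summing $\binom nj2^{n-j}\cdot2^{j(j-1)}$ over $j$ gives at most $\sum_j\binom nj2^{n-j}2^{n(n-1)}=3^{n}2^{n(n-1)}\le2^{n(n+1)}$, since $3^{n}\le 4^{n}$. This is the route I would take; alternatively one can note that the number of choices per $x\in[n]$ is at most $2^{n+1}$ uniformly.

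For (b) we use Lemma~\ref{lem:code} directly. The data are as follows:
\begin{enumerate}[label=\textup{(\roman*)},leftmargin=2em]
\item $(B,T)$, with at most $2^{2n}$ choices;
\item the map $r$, which sends each $x\in T\setminus B$ to an element of $[n]$, with at most $n^{n}$ choices;
\item for each $x\in T\setminus B$, the set $R_x$.
\end{enumerate}
By Lemma~\ref{lem:cover}, $|R_x|\le\log_2|\cL|$, hence $|R_x|\le w$ because $|R_x|$ is an integer and $\lfloor\log_2|\cL|\rfloor=w$. Encode $R_x$ as a sequence of length $w$ over $[n]\cup\{\ast\}$: list its elements, then pad with a blank symbol. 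This gives at most $(n+1)^{w}$ choices for each $x$, and at most $(n+1)^{wn}$ in total over the at most $n$ points $x$. Multiplying gives $2^{2n}n^{n}(n+1)^{wn}$. One detail: when $T=B$ there are no $x$, and the bound still holds trivially.

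The only real obstacle is the exact constant in (a). There, the naive encoding gives $2^{n^{2}+2n}$, and one must exploit the redundancy that $x\in A_x$ and $B\subseteq A_x\subseteq T$. If the sum above is off, I would instead record, for each $x\in[n]$, a single subset $S_x\subseteq[n]$ together with one bit, as follows:
\begin{enumerate}[label=\textup{(\roman*)},leftmargin=2em]
\item if $x\in T\setminus B$, take $S_x=A_x$ and set the bit to say so;
\item if $x\notin T\setminus B$, the bit records whether $x\in B$, and $S_x$ is unused except that $S_{\min}$ stores $T$.
\end{enumerate}
Since $T=B\cup\bigcup_{x\in T\setminus B}A_x$, storing $T$ is redundant whenever $T\ne B$, so the data fit into $(2^{n}\cdot2)^{n}=2^{n(n+1)}$ codes.
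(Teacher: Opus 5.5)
Your proof is correct and essentially the paper's. Part (b) is exactly the count via Lemma~\ref{lem:code}; your padded-sequence encoding of $R_x$ just replaces the paper's $\sum_{i\le w}\binom ni\le(n+1)^{w}$. In (a) your refined count $\sum_j\binom nj2^{n-j}2^{j(j-1)}\le3^{n}2^{n(n-1)}\le2^{n(n+1)}$ is valid, but the paper reaches $2^{n(n+1)}$ more directly: it never stores $T$, sets $A_x=B$ for $x\notin T\setminus B$, and so $B$ together with the map $x\mapsto A_x$ on all of $[n]$ costs exactly $2^{n}\cdot(2^{n})^{n}$.

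Your final fallback encoding is unnecessary, and as written it would need repair. A single bit cannot distinguish the three cases $x\in T\setminus B$, $x\in B$ and $x\notin T$, so the otherwise unused $S_x$ would have to carry the extra information.
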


\begin{proof}
(a) By Lemma~\ref{lem:structure}(a), $\cL$ is determined by $B$ and the
map $x\mapsto A_x$, extended by $A_x=B$ for $x\notin T\setminus B$. There
are $2^{n}$ choices for $B$ and $(2^{n})^{n}$ for the map.

(b) By Lemma~\ref{lem:code}, $\cL$ is determined by $B$, $T$, the map
$r$ and the sets $R_x$, extended in any fixed way to all $x\in[n]$. By
Lemma~\ref{lem:cover} each $R_x$ has at most $w$ elements. There are at
most $2^{n}$ choices for each of $B$ and $T$, at most $n^{n}$ for $r$,
and, since the number of subsets of $[n]$ with at most $w$ elements is
$\sum_{i\le w}\binom ni\le\sum_{i\le w}n^{i}\le(n+1)^{w}$, at most
$(n+1)^{wn}$ for the family $(R_x)_{x\in[n]}$.
\end{proof}

\begin{proof}[Proof of Theorem~\ref{thm:f}]
The number of $2$-colourings under which a fixed family of $N$ distinct
sets is monochromatic is $2\cdot2^{2^{n}-N}$.

For the first bound put $m_1=n^{2}+n+2$. By Lemma~\ref{lem:count}(a),
the colourings under which some sublattice with at least $m_1$ members is
monochromatic number at most $2^{n(n+1)}\cdot2\cdot2^{2^{n}-m_1}=
2^{2^{n}-1}$. So some colouring has no such sublattice, and $f(n)\le
m_1-1=n^{2}+n+1$.

For the second bound let $L=\lfloor\log_2n\rfloor+1$, so that
$n+1\le2^{L}$, and for $w\ge0$ put
\[
E(w)=n\bigl(L(w+1)+2\bigr)+w+3 .
\]
The function $E$ is increasing, and $E(w)\le2^{w}$ for $w\ge3L+2$.
Indeed, with $P=2^{L}$ we have $n\le P-1$ and $L+1\le P$, so
\[
E(3L+2)=n(3L^{2}+3L+2)+3L+5\le P(3L^{2}+3L+5)\le4P^{3}=2^{3L+2},
\]
and $E(w+1)=E(w)+nL+1\le2E(w)$. Put $m_2=E(3L+1)=nL(3L+2)+2n+3L+4$.
Every sublattice $\cL$ with $|\cL|\ge m_2$ and $\lfloor\log_2|\cL|\rfloor
=w$ satisfies $|\cL|\ge E(w)$: for $w\le3L+1$ because $E(w)\le m_2$, and
for $w\ge3L+2$ because $E(w)\le2^{w}\le|\cL|$. By
Lemma~\ref{lem:count}(b), and using $n\le2^{L}$ and $n+1\le2^{L}$, there
are at most $2^{2n}n^{n}(n+1)^{wn}\le2^{n(L(w+1)+2)}$ such sublattices for
each $w$, and each is monochromatic under at most $2^{2^{n}+1-E(w)}$
colourings. The colourings under which some sublattice with at least
$m_2$ members is monochromatic therefore number at most
\[
\sum_{w=0}^{n}2^{n(L(w+1)+2)}\,2^{2^{n}+1-E(w)}
=2^{2^{n}}\sum_{w=0}^{n}2^{-(w+2)}\ <\ 2^{2^{n}-1},
\]
the sum running up to $w=n$ because a sublattice has at most $2^{n}$
members. So some colouring has no such sublattice, and $f(n)\le
m_2-1=nL(3L+2)+2n+3L+3$.

As $L\le\log_2n+1$, the second bound is $(3+o(1))\,n(\log_2n)^{2}$. The
last assertion of the theorem follows from Theorem~\ref{thm:lower} with
$t>C$.
\end{proof}

The first bound is the smaller one for $n<210$ and the second for
$n\ge263$. Both bounds come from the random colouring, and the gap
between them and the lower bound $\lceil(n+1)/2\rceil$ is the subject of
Question~\ref{q:f}.

\section{Formal verification}
\label{sec:lean}

The results of Sections~\ref{sec:cubes}--\ref{sec:f} have been
formalised in Lean~4 \cite{MU21}, version~4.34.1, with the Mathlib
library \cite{Mathlib20} at the release of the same version number. The
development contains no unproved steps, and for each statement listed
below Lean reports that the proof uses only the three standard axioms of
its logic, namely propositional extensionality, the axiom of choice and
the soundness of quotient types.\footnote{These axioms are part of the
foundations on which Mathlib is built. In particular, no statement relies
on an additional axiom or on evaluating code in place of a proof.}

In the formalisation the ground set is the type of $n$ elements, a
$k$-colouring is a function from its subsets to a finite type with $k$
elements, and $F_k(n)$, $F(n)$ and $f(n)$ are defined literally as the
minimum over all colourings of the largest size of a monochromatic
family that is closed under union, respectively under union and
intersection. It is also proved that $F_k(n)$ is the largest $m$ such that
every $k$-colouring has a monochromatic union-closed family of at least
$m$ sets. The following statements are checked.

\begin{enumerate}[label=(\arabic*),leftmargin=2em]
\item The bound \eqref{eq:trivial}, and $F_k(n)\le F(n)$ for $k\ge2$.
\item Theorem~\ref{thm:lower} in the stated form
$n^{t+1}\le k^{2^{t}+1}t^{t}(4M^{2})^{t+1}F_k(n)$ for $n\ge1$, and the
bound $F(n)\ge n^{t}$ for $n\ge2^{(t+2)^{2}2^{t}}$, together with
Lemmas~\ref{lem:shape}--\ref{lem:average} and
Propositions~\ref{prop:glue} and~\ref{prop:saturation} in the form in
which the proof uses them.
\item Lemma~\ref{lem:indep} and Theorem~\ref{thm:upper}, both the bound
$F(n)\le\sum_{j<d}\binom nj$ and the two explicit forms, with the
Sauer--Shelah lemma taken from Mathlib.
\item Corollary~\ref{cor:conj}: part~(i) with the stated $\omega(n)$ and
$\omega(n)\to\infty$, parts (ii)--(iv), and the conjunction of the two
conjectures in the form \eqref{eq:conj} as a single theorem.
\item Proposition~\ref{prop:random}, Lemma~\ref{lem:subfamily} and
Proposition~\ref{prop:count}.
\item Corollary~\ref{cor:hilbertlemma}, Theorem~\ref{thm:hilbert},
Proposition~\ref{prop:AP}, Theorem~\ref{thm:vdw}, derived from Mathlib's
Hales--Jewett theorem as described above, and Theorem~\ref{thm:howorka},
proved for every number $W$ with the property that defines $W_k(p)$.
\item Lemmas~\ref{lem:structure}--\ref{lem:count} and
Theorem~\ref{thm:f}.
\end{enumerate}

The formal proofs follow the arguments given here, with one change of
language: probabilities are replaced by counts. Permutations and size
vectors are counted rather than sampled, so that Lemma~\ref{lem:average}
becomes a double counting over the product of the two sets, and the
random colourings of Sections~\ref{sec:upper} and~\ref{sec:f} are
replaced by counts of colourings. The box inequality is proved for sums
over all pairs of points, which is Lemma~\ref{lem:box} multiplied by
$|G|^{2t}$. Outside the formal development remain only the figures and
the elementary asymptotic rewritings of the proved bounds, such as
$\omega(n)=(1-o(1))\log_2\log_2n$ in Corollary~\ref{cor:conj} and
$n^{\log_2\log_2n-2-o(1)}$ after Proposition~\ref{prop:random}.

\section{Questions}
\label{sec:questions}

With both conjectures settled, what remains open is the precise size of
the two functions, above all of $F$.

\begin{question}
\label{q:F}
What is the order of magnitude of $\log F(n)/\log n$? By
Corollary~\ref{cor:conj} it lies between $(1-o(1))\log_2\log_2n$ and
$(1+o(1))\log_2n$. Is $F(n)=n^{\Theta(\log\log n)}$, or is
$F(n)=n^{\Theta(\log n)}$?
\end{question}

The random colouring is the natural candidate for an upper bound, and by
Proposition~\ref{prop:random} it cannot give anything below
$n^{(1-o(1))\log_2\log_2n}$. Proposition~\ref{prop:count} reduces the
matching upper bound to a count.

\begin{question}
\label{q:count}
Is there a constant $C$ such that $U(n,N)<2^{N-1}$ for all large $n$ and
all $N\ge n^{C\log_2\log_2n}$?
\end{question}

A positive answer would give $F(n)<n^{C\log_2\log_2n}$ by
Proposition~\ref{prop:count}, and together with
Corollary~\ref{cor:conj}(i) it would determine $\log F(n)/\log n$ up to a
constant factor, answering Question~\ref{q:F} in favour of
$n^{\Theta(\log\log n)}$. By Corollary~\ref{cor:conj}(i) the inequality
fails for $N\le n^{\omega(n)}$, so the threshold in Question~\ref{q:count}
cannot be lowered below $n^{(1-o(1))\log_2\log_2n}$.

\begin{question}
\label{q:f}
Is $f(n)=O(n)$? Is $f(n)=\lceil(n+1)/2\rceil$ for every $n$?
\end{question}

The lower bound $\lceil(n+1)/2\rceil$ comes from a single chain, and the
upper bounds of Theorem~\ref{thm:f} from counting sublattices in a
random colouring, which leaves a factor of order $(\log n)^{2}$ between
them.

\section*{Data availability}

The Lean formalisation of the results of this paper and the sources of
the paper are available at
\href{https://doi.org/\zenododoi}{https://doi.org/\zenododoi}. Claude
(Anthropic) assisted with the coding.


\end{document}